\documentclass[12pt]{amsart}
\usepackage{amsmath,amsthm,amsfonts,amssymb}
\usepackage{graphics,color}
\usepackage{hyperref}

\newtheorem{theorem}{Theorem}
\newtheorem{proposition}{Proposition}
\newtheorem{remark}{Remark}
\newtheorem{lemma}{Lemma}

\numberwithin{equation}{section}



\begin{document}

\title[Uniqueness and blowup properties for SDEs]{On uniqueness and blowup properties for a class of second order SDEs 
}
\author[Gomez, Lee, Mueller, Neuman, and Salins]{Alejandro Gomez \and Jong Jun Lee \and Carl Mueller \and Eyal Neuman \and Michael Salins}

\address{Alejandro Gomez}
\email{gomezalejandroh@gmail.com}

\address{Jong Jun Lee: Dept. of Mathematics
\\University of Rochester
\\Rochester, NY  14627}
\email{jlee263@ur.rochester.edu}

\address{Carl Mueller: Dept. of Mathematics
\\University of Rochester
\\Rochester, NY  14627 }
\urladdr{http://www.math.rochester.edu/people/faculty/cmlr}

\address{Eyal Neuman: Dept. of Mathematics
\\Imperial College London
\\ London, UK  SW7 2AZ}
\urladdr{http://eyaln13.wixsite.com/eyal-neuman}

\address{Michael Salins: Dept. of Mathematics and Statistics
\\Boston University
\\ Boston, MA 02215}
\urladdr{http://math.bu.edu/people/msalins/}

\keywords{uniqueness, blowup, stochastic differential equations, wave equation, white noise, stochastic partial differential equations.}
\subjclass[2010]{Primary, 60H10; Secondary, 60H15.}
\begin{abstract}
As the first step for approaching the uniqueness and blowup properties of the solutions of the stochastic wave equations with multiplicative noise, we analyze the conditions for the uniqueness and blowup properties of the solution $(X_t,Y_t)$ of the equations $dX_t= Y_tdt$, $dY_t = |X_t|^\alpha dB_t$, $(X_0,Y_0)=(x_0,y_0)$. In particular, we prove that solutions are nonunique if $0<\alpha<1$ and $(x_0,y_0)=(0,0)$ and unique if $1/2<\alpha<1$ and $(x_0,y_0)\neq(0,0)$. We also show that blowup in finite time holds if $\alpha>1$ and $(x_0,y_0)\neq(0,0)$.

\end{abstract}
\maketitle

\section{Introduction and Main Results}
\label{section:introduction}

The basic uniqueness theory for ordinary differential equations (ODE) has
been well understood for a long time.  If $F(u)$ is a Lipschitz continuous function,
then
\begin{equation*}
\dot{u}(t)=F(u), \qquad u(0)=u_0
\end{equation*}
has a unique solution valid for all time $t\geq0$.  Furthermore, the
Lipschitz condition on the coefficients cannot be weakened to H\"older
continuity with index less than 1.

The situation for stochastic differential equations (SDE) is very different.
The classical Yamada-Watanabe theory of strong uniqueness \cite{yw71} states
that if $f(x)$ is a locally H\"older continuous function of index $1/2$ with
at most linear growth, then
\begin{equation*}
dX=f(X)dW, \qquad X_0=x_0
\end{equation*}
has a unique strong solution valid for all time $t\geq0$.  The H\"older
continuity condition cannot be weakened to indices below $1/2$.  Besides the
H\"older $1/2$ condition, another notable difference from the ODE case is
that the Yamada-Watanabe uniqueness result for SDE is essentially a
one-dimensional result.  That is, much less is known for vector-valued SDE,
whereas the above statement for ODE is still true in the case of
vector-valued solutions.

The basic conditions for uniqueness of partial differential equations (PDE)
are the same as for ODE: coefficients must be Lipschitz continuous.  But the
corresponding results for stochastic partial differential equations (SPDE)
have only appeared recently.  These results are restricted to the stochastic
heat equation,
\begin{align}\label{she}
\partial_t u&=\Delta u+f(u)\dot{W}  \\
\notag u(0,x) &= u_0(x).
\end{align}
Here $x\in\mathbf{R}$, $\dot{W}=\dot{W}(t,x)$ is two-parameter white noise,
and $f$ is H\"older continuous with index $\gamma$.  In this case, strong
uniqueness holds for $\gamma>3/4$ \cite{mp10}, but fails for $\gamma<3/4$
\cite{mmp14}.  One can also replace white noise by colored noise, which may
allow $x$ to take values in $\mathbf{R}^d$ for $d>1$, and may change the
critical value of $\gamma$.

The counterexample in \cite{mmp14} which proved nonuniqueness for
$\gamma<3/4$ involved the equation
\begin{align*}
\partial_t u&=\Delta u+|u|^\gamma\dot{W}  \\
u(0,x) &= 0.
\end{align*}
In fact, the case of $\gamma=1/2$ is the well-studied case of super-Brownian
motion, also called the Dawson-Watanabe process, see \cite{daw93},
\cite{per02}.

Other types of SPDE than the stochastic heat equations are still unexplored with regard
to uniqueness, except for the standard fact that uniqueness holds with
Lipschitz coefficients.  For example, there is no information about the
critical H\"older continuity of $f(u)$ for uniqueness of the stochastic wave
equation:
\begin{align}\label{swe}
\partial_t^2 u&=\Delta u+f(u)\dot{W}  \\
\notag u(0,x) &= u_0(x), \qquad \partial_tu(0,x) = u_1(x).
\end{align}
Here again $x\in\mathbf{R}$ and $\dot{W}=\dot{W}(t,x)$ is two-parameter
white noise.

In order to shed light on uniqueness for the stochastic wave equation, we
propose studying the corresponding SDE $\ddot{X}=f(X)\dot{B}$.  By making this
equation into a system of first order equations, we arrive at the equations
\begin{align}
\label{wave-sde}
dX &= Ydt  \nonumber \\
dY &= |X|^\alpha dB  \\
(X_0, Y_0) &= (x_0, y_0).  \nonumber
\end{align}
Here $B=B_t$ is a standard Brownian motion, and we use the subscripts $X_t$ or
$Y_t$ to indicate dependence on time, rather than $X(t)$ or $Y(t)$.  Here we
focus on the coefficient $f(x)=|x|^\alpha$ because this function had special
importance in the stochastic heat equation, and it is a prototype of a
function which is H\"older continuous of order $\alpha$.

Now we are ready to present our main results. In our first theorem, we show that when $\alpha>1/2$ and the initial condition is nonzero, strong uniqueness holds for the solutions of (\ref{wave-sde}) up to the hitting time of the origin.
\begin{theorem}
\label{t1}
If $\alpha>1/2$ and $(x_0,y_0)\ne(0,0)$, then (\ref{wave-sde}) has a
unique solution in the strong sense, up to the time $\tau$ at which the solution $(X_t,Y_t)$ first takes the value $(0,0)$.
\end{theorem}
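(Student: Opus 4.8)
The plan is to establish \emph{pathwise} uniqueness up to $\tau$; combined with the (standard) existence of a weak solution, Yamada--Watanabe then upgrades this to strong uniqueness. When $\alpha\ge1$ the function $|x|^\alpha$ is locally Lipschitz, including at $x=0$, so classical ODE/SDE theory already applies, and the only substantive range is $1/2<\alpha<1$, on which I focus. Let $(X,Y)$ and $(\tilde X,\tilde Y)$ solve \eqref{wave-sde} with the same initial data and the same Brownian motion $B$, and set $U_t=X_t-\tilde X_t$, $V_t=Y_t-\tilde Y_t$, and $g_t=|X_t|^\alpha-|\tilde X_t|^\alpha$, so that $U_0=V_0=0$ and
\begin{equation*}
dU_t=V_t\,dt,\qquad dV_t=g_t\,dB_t .
\end{equation*}
I would first localize: for $0<\delta<R$ let $T=T_{\delta,R}$ be the first time either solution leaves the annulus $\{\delta^2\le x^2+y^2\le R^2\}$, so $T\uparrow\tau$ as $\delta\downarrow0$, $R\uparrow\infty$; it suffices to show $U\equiv V\equiv0$ on $[0,T]$ for each $\delta,R$. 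On $[0,T]$ the coefficient is bounded, $|g_t|\le 2R^\alpha$, so stochastic Fubini yields the representation $U_t=\int_0^t(t-r)\,g_r\,dB_r$ and hence
\begin{equation*}
\mathbf{E}\,U_t^2=\int_0^t(t-r)^2\,\mathbf{E}\,g_r^2\,dr ,
\end{equation*}
the integrability being guaranteed by the localization.

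The analytic heart of the argument is the weighted Lipschitz estimate
\begin{equation*}
\bigl(|x|^\alpha-|\tilde x|^\alpha\bigr)^2\le C\,(|x|\vee|\tilde x|)^{2\alpha-2}\,(x-\tilde x)^2 ,
\end{equation*}
valid for all $x,\tilde x\in\mathbf{R}$ when $0<\alpha<1$ (one checks the two cases in which $x,\tilde x$ share a sign and in which they straddle $0$). Substituting this into the identity above concentrates the entire difficulty in the singular weight $(|X_r|\vee|\tilde X_r|)^{2\alpha-2}$, which blows up exactly where $X$ and $\tilde X$ simultaneously approach $0$. This is precisely where the hypotheses $(x_0,y_0)\ne(0,0)$ and the stopping at $\tau$ enter: on $[0,T]$ the pair $(X,Y)$ stays away from the origin, so whenever $|X_r|$ is small the velocity $|Y_r|$ is bounded below, and $X$ crosses its zero set \emph{transversally}. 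Near such a crossing at a time $t_0$ one has $X_r\approx Y_{t_0}(r-t_0)$, so the weight behaves like $|r-t_0|^{2\alpha-2}$, and its contribution to $\mathbf{E}\,U_t^2$ is finite exactly because $2\alpha-2>-1$, i.e. because $\alpha>1/2$.

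Carrying this out, I would use continuity and the fact that transversal crossings cannot accumulate on $[0,T]$ to reduce to a neighbourhood of a single crossing time $t_0$ up to which the two solutions still agree. After a further stopping so that $|Y|\ge\tfrac12|Y_{t_0}|$ on $(t_0,t_0+h]$ (whence $|X_r|\vee|\tilde X_r|\ge c\,|Y_{t_0}|\,(r-t_0)$), the weighted bound gives $g_r^2\le C'(r-t_0)^{2\alpha-2}U_r^2$, and the Fubini identity collapses to the scalar singular Gronwall inequality
\begin{equation*}
m(s)\le C''\,s^2\int_0^s\sigma^{2\alpha-2}\,m(\sigma)\,d\sigma,\qquad m(s):=\mathbf{E}\,U_{t_0+s}^2 ,
\end{equation*}
whose kernel is integrable since $\alpha>1/2$. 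Iterating this inequality (each pass raising the power of $s$) forces $m\equiv0$ on a neighbourhood of $t_0$, so $U\equiv0$ there and, since then $g\equiv0$, also $V\equiv0$. On the complementary intervals, where $X$ is bounded away from $0$, the coefficient is genuinely Lipschitz and uniqueness is classical; patching the two types of estimate yields $U\equiv V\equiv0$ on $[0,T]$, and letting $\delta\downarrow0$, $R\uparrow\infty$ gives uniqueness on $[0,\tau)$.

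The main obstacle I anticipate is the rigorous control of the linearization and of the singular weight near crossings: one must show that the deviation of $X_r$ from the transversal profile $Y_{t_0}(r-t_0)$, and the error in the weighted bound, do not destroy the integrability of the kernel $\sigma^{2\alpha-2}$, and one must handle the random length $h$ and the random constant $|Y_{t_0}|$ via stopping times so that the singular Gronwall argument applies on the relevant event. Managing the transition zones, where $X$ passes between ``bounded away from $0$'' and ``near a crossing,'' while keeping all constants uniform, is the delicate bookkeeping the proof must control; the threshold $\alpha>1/2$ is exactly the borderline at which the kernel remains integrable and the argument closes.
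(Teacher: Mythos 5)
Your proposal is correct and follows essentially the same route as the paper: localize away from the origin and from infinity, decompose the stopped time interval at the zero crossings of $X$ (which are transversal and cannot accumulate), apply a mean-value/weighted Lipschitz bound to produce the singular kernel $r^{2\alpha-2}$, and close with a singular Gronwall argument that works precisely because $2\alpha-2>-1$, i.e.\ $\alpha>1/2$. The linearization bookkeeping you anticipate as the main obstacle is dispatched in the paper by stopping at the $\ell^\infty$-ball of radius $2^{-n}$: while $|X_t|\le 2^{-n}$ one automatically has $|Y_t|\ge 2^{-n}$ with constant sign, hence the exact lower bound $|X_t|\ge 2^{-n}(t-t_0)$ after a crossing at $t_0$, so no approximation error to the transversal profile ever enters.
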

In the next theorem, we prove that when $\alpha>1/2$, the unique strong solution of (\ref{wave-sde}) from Theorem \ref{t1} never reaches the origin.
\begin{theorem}
\label{t2}
If $\alpha>1/2$ and $(x_0,y_0)\ne(0,0)$, then the unique strong solution
$(X_t,Y_t)$ to (\ref{wave-sde}) never reaches the origin.  That is, the time
$\tau$ defined in Theorem 1 is infinite almost surely.
\end{theorem}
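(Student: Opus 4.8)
My plan is to show that the origin is \emph{polar} for the degenerate diffusion $(X_t,Y_t)$, which together with Theorem \ref{t1} yields $\tau=\infty$ almost surely. The generator is
\[
\mathcal{L} = y\,\partial_x + \tfrac12|x|^{2\alpha}\,\partial_{yy},
\]
whose noise acts only in the $y$-direction, so the $y$-axis $\{x=0\}$ is a degenerate locus that the path crosses transversally whenever $Y\neq0$. Two structural features should drive the argument: the symmetry $(x,y)\mapsto(-x,-y)$, which preserves the law, and the exact anisotropic scaling invariance under which $(X_t,Y_t)$ from $(x_0,y_0)$ has the same law as $(\lambda^{-3}X_{\lambda^{2(1-\alpha)}t},\,\lambda^{-(1+2\alpha)}Y_{\lambda^{2(1-\alpha)}t})$ from $(\lambda^{-3}x_0,\lambda^{-(1+2\alpha)}y_0)$. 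The textbook route to polarity is to build $u\ge0$ with $\mathcal{L}u\le0$ on a punctured neighborhood of $0$ and $u\to\infty$ at $0$, so that $u(X_{t\wedge\sigma_n},Y_{t\wedge\sigma_n})$ is a nonnegative supermartingale (here $\sigma_n$ exit the shrinking neighborhoods) and hence cannot blow up. A warning, however, is that no smooth homogeneous $u$ can work: the natural scale-invariant candidates are singular exactly on the $y$-axis that the path crosses, so the supermartingale must instead be produced through the scaling reduction below.

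The main reduction is to the self-similar variables. On $\{X\neq0\}$ set $\rho=|X|$ and $\xi=Y\rho^{-p}$ with $p=(1+2\alpha)/3$ (the exponent making $\xi$ scale-invariant), and introduce the random clock $d\tau=\rho^{2(\alpha-1)/3}\,dt$. A direct It\^o computation gives, on each sign component of $X$ (up to the sign flips handled by the symmetry),
\begin{align}
d\xi &= d\tilde B - p\,\xi^2\,d\tau, \\
d\log\rho &= \xi\,d\tau,
\end{align}
where $\tilde B$ is a Brownian motion in the new clock. The striking point is that $\xi$ is an \emph{autonomous} one-dimensional diffusion whose drift $-p\xi^2$ is nonpositive for every $\xi$, and that this single diffusion drives the radius through $\log\rho$. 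Reaching the origin forces $\log\rho\to-\infty$, i.e. $\int\xi\,d\tau=-\infty$, together with $Y=\xi\rho^{p}\to0$; each crossing of the $y$-axis corresponds to $|\xi|\to\infty$, after which the strong Markov property lets me restart the pair $(\rho,\xi)$.

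From here the engine is the boundary analysis of the scalar diffusion $d\xi=d\tilde B-p\xi^2\,d\tau$ via its scale function $s'(\xi)=e^{2p\xi^3/3}$ and speed measure, using Feller's test to classify the endpoints $\pm\infty$ and to control the law of the additive functional $\int\xi\,d\tau$ that governs $\log\rho$. I would combine this with a scaling/gambler's-ruin bookkeeping across the discrete scales $\rho\asymp 2^{-n}$: by scale invariance the transition law between consecutive anisotropic spheres is independent of $n$, so the sequence of radii recorded at successive $y$-axis crossings is a multiplicative random walk whose logarithm has increments with a fixed (scale-free) distribution; the threshold $\alpha>1/2$ is precisely what should place the relevant Feller integrals and increment moments on the side that makes this walk a.s. fail to descend to $-\infty$, giving $\inf_{t}(X_t^2+Y_t^2)>0$ on every finite interval and hence $\tau=\infty$.

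The hard part, I expect, is exactly the passage through the degenerate $y$-axis and the assembly of the one-dimensional analysis into a two-dimensional non-attainment statement. Since the path crosses $\{x=0\}$ infinitely often and these crossings may a priori accumulate at the origin, one must show that the coupling between the radial variable $\log\rho$ and the angular diffusion $\xi$ does not allow $\rho$ and $Y=\xi\rho^p$ to vanish simultaneously; controlling $\xi$ as it excursions to $\pm\infty$ (where the clock $d\tau=\rho^{2(\alpha-1)/3}dt$ degenerates) and patching the strong Markov restarts \emph{uniformly across all scales} is the principal obstacle, and is where the exponent $1/2$ must be seen to be sharp.
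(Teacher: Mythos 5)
Your self-similar reduction is correct as a computation: with $p=(1+2\alpha)/3$ and the clock $d\tau=\rho^{2(\alpha-1)/3}dt$, It\^o's formula on $\{X>0\}$ does give $d\xi=d\tilde B-p\xi^2\,d\tau$ and $d\log\rho=\xi\,d\tau$ (the martingale part $X^{\alpha-p}dB$ has quadratic variation $X^{2\alpha-3p+1}d\tau=d\tau$ precisely for this $p$), and the scaling exponents you list are right. But what you have written is a program, not a proof: the entire engine --- showing that the walk of radii recorded at successive $y$-axis crossings a.s. fails to descend to $-\infty$ --- is never established. Feller's test applied to $\xi$ alone cannot deliver it, because $\xi$ simply explodes to $-\infty$ in finite clock time at every crossing; what you actually need is the law (or at least the sign of the mean) of the increment of $\log|Y|$ across one excursion of $\xi$ entering from $+\infty$ and exploding at $-\infty$, i.e.\ control of the additive functional $\int\xi\,d\tau$ between explosions, together with the entrance-boundary analysis at $\pm\infty$ and a proof that crossing times cannot accumulate while $|Y|$ stays bounded away from $0$. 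None of this is carried out, and note also that your bookkeeping variable is wrong: $\log\rho=-\infty$ at every crossing, so "reaching the origin forces $\int\xi\,d\tau=-\infty$" is vacuous; the quantity to track is $|Y_{\sigma_k}|$ at the crossing times $\sigma_k$.

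More seriously, the premise you are counting on to close the argument is false: $\alpha>1/2$ is \emph{not} the threshold for non-attainment, and no Feller integral or moment condition in your scheme will flip sign at $\alpha=1/2$. The hypothesis $\alpha>1/2$ enters Theorem \ref{t2} only through Theorem \ref{t1}, i.e.\ so that "the unique strong solution" is well defined; the non-attainment itself holds for every $\alpha\ge 0$. Indeed the paper's proof is completely $\alpha$-free: using the time change \eqref{t-change} from the proof of Theorem \ref{t3}, the pair $(h(X_{T^{-1}(t)}),Y_{T^{-1}(t)})$ becomes $\bigl(h(x_0)+y_0t+\int_0^t\tilde B_s\,ds,\;y_0+\tilde B_t\bigr)$, i.e.\ a Brownian motion and its integral (the Kolmogorov process), and the origin is shown to be polar for that process by a direct covering argument on the grid $k2^{-2n}$, combining the joint density bound \eqref{densityJB}, $f_{B_t,J_t}(x,y)\le t^{-2}$, with L\'evy's modulus of continuity \eqref{mod-con}. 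Since you expect the decisive estimate to be produced by the threshold $1/2$ and it will not be, you have no identified mechanism for proving that your multiplicative random walk does not drift to $-\infty$; that is the gap, and it is exactly the hard part you deferred.
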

In our next result, we prove the nonuniqueness for the solutions of (\ref{wave-sde}) initiated at the origin.
\begin{theorem}
\label{t3}
If $0<\alpha<1$ and $(x_0,y_0)=(0,0)$, then both strong and weak uniqueness
fail for (\ref{wave-sde}).
\end{theorem}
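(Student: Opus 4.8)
The first thing to record is that $(X_t,Y_t)\equiv(0,0)$ is itself a solution of (\ref{wave-sde}) with the stated initial data: if $X\equiv 0$ then $|X_t|^\alpha\equiv 0$, so $dY\equiv 0$, whence $Y\equiv 0$ and $dX=Y\,dt\equiv 0$. Consequently both forms of uniqueness will fail at once as soon as I produce a \emph{single} weak solution issued from $(0,0)$ that is not identically zero. Such a solution has a law different from the (degenerate) law of the zero solution, which already defeats weak uniqueness. Strong uniqueness then fails for free: since weak solutions of (\ref{wave-sde}) exist (the coefficient $|x|^\alpha$ is continuous with sublinear growth, so there is no finite-time blowup for $\alpha<1$), the Yamada--Watanabe theorem \cite{yw71}, in the contrapositive form ``weak existence $+$ pathwise uniqueness $\Rightarrow$ uniqueness in law,'' shows that pathwise uniqueness cannot hold once uniqueness in law fails. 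Thus the whole theorem reduces to constructing one nontrivial weak solution from the origin.

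The construction is powered by the Brownian scaling invariance of (\ref{wave-sde}). If $(X,Y)$ solves (\ref{wave-sde}) driven by $B$, then for every $\lambda>0$ the rescaled pair
\begin{equation*}
X^\lambda_t:=\lambda^{-p}X_{\lambda t},\qquad Y^\lambda_t:=\lambda^{1-p}Y_{\lambda t},\qquad p:=\frac{3}{2(1-\alpha)},
\end{equation*}
solves (\ref{wave-sde}) driven by the Brownian motion $B^\lambda_t:=\lambda^{-1/2}B_{\lambda t}$. The exponent $p$ is the unique value balancing the three scalings coming from $dX=Y\,dt$, the H\"older exponent $\alpha$, and $dB\sim(dt)^{1/2}$. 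Since $0<\alpha<1$ we have $p>3/2>1$, so the natural spatial scale of $X_t$ is $t^p\to 0$ and that of $Y_t=\dot X_t$ is $t^{p-1}\to 0$ as $t\downarrow 0$; this is precisely the scaling one expects of a solution that emanates from $(0,0)$ at $t=0$ yet is nonzero for $t>0$.

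To realize such a solution I would start from a fixed global weak solution $(X,Y)$ of (\ref{wave-sde}) issued from the nonzero point $(1,0)$ and form the scaled solutions $(X^{\lambda_n},Y^{\lambda_n})$ along a sequence $\lambda_n\to\infty$; these are weak solutions issued from $(\lambda_n^{-p},0)\to(0,0)$. The plan is to show this family is tight in $C([0,\infty);\mathbf{R}^2)$ — using the sublinear growth of $|x|^\alpha$ together with Kolmogorov-type moment bounds for $X^{\lambda_n}$ and for the martingale part of $Y^{\lambda_n}$ — and then to pass to a subsequential limit $(\widehat X,\widehat Y)$. Stability of the associated martingale problem under the continuous coefficient $|x|^\alpha$ identifies the limit as a weak solution of (\ref{wave-sde}), and the uniform control near $t=0$ forces its initial value to be $(0,0)$. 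An essentially equivalent route is to construct directly the self-similar solution $X_t=t^p\,\xi_{\log t}$ through the Lamperti change of variables, in which $\xi$ solves a time-homogeneous SDE on the $\log$-time scale; continuity of $X_t$ and $Y_t$ at the origin then follows from $p>0$ and a mild moment bound on $\xi$.

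The main obstacle is to guarantee that the limit $(\widehat X,\widehat Y)$ is \emph{not} the zero solution: a priori the mass of $(X^{\lambda_n},Y^{\lambda_n})$ could drain back toward the origin in the limit, returning us to the trivial solution. The decisive input is once more the scaling. Because each $(X^{\lambda_n},Y^{\lambda_n})$ is an exact rescaling of the \emph{single} process $(X,Y)$, its law over a fixed window, say $t\in[1,2]$, is a genuine rescaling of the large-time behavior of $(X,Y)$; concretely, $\mathbb{P}\bigl(|X^{\lambda_n}_1|\ge c\bigr)=\mathbb{P}\bigl(|X_{\lambda_n}|\ge c\,\lambda_n^{p}\bigr)$, so I need a lower bound showing that $X_s$ is of order $s^p$ with probability bounded away from $0$ as $s\to\infty$. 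When $\alpha>1/2$ this is reinforced by Theorem \ref{t2}, which rules out absorption at the origin along the approximating sequence. Establishing such a nondegeneracy estimate and propagating it through the weak limit, so that $\mathbb{P}(\widehat X_1\ne 0)>0$, is the crux of the argument and the step I expect to demand the most care.
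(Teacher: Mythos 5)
Your reduction step is exactly the paper's: the zero process solves (\ref{wave-sde}) from the origin, and by the Yamada--Watanabe theorem (weak existence plus pathwise uniqueness implies uniqueness in law), exhibiting a single nonzero weak solution started from $(0,0)$ destroys weak and strong uniqueness simultaneously. Your scaling computation is also correct: with $p=\frac{3}{2(1-\alpha)}$ the system is invariant under $(X,Y,B)\mapsto(\lambda^{-p}X_{\lambda\cdot},\lambda^{1-p}Y_{\lambda\cdot},\lambda^{-1/2}B_{\lambda\cdot})$.

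However, where the paper actually constructs the nonzero solution, you do not. The paper builds it explicitly by a time change: with $T(t)=\int_0^t|X_s|^{2\alpha}ds$ and $h(x)=\frac{1}{2\alpha+1}|x|^{2\alpha+1}\mathrm{sgn}(x)$, the transformed pair satisfies $h(\tilde X_t)=\int_0^t\tilde B_s\,ds$, $\tilde Y_t=\tilde B_t$; running this backwards from a Brownian motion yields a candidate nonzero solution, and the only analytic issue is finiteness of the inverse time change $T^{-1}(t)=C\int_0^t\bigl|\int_0^s\tilde B_r\,dr\bigr|^{-\frac{2\alpha}{2\alpha+1}}ds$, which is precisely Lemma \ref{lemma-brownian-int} (the integral $\int_0^\delta|J_t|^{-\beta}dt$ is a.s.\ finite for $\beta<2/3$, and $\frac{2\alpha}{2\alpha+1}<\frac23$ if and only if $\alpha<1$). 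In your compactness scheme, the corresponding work is the nondegeneracy estimate $P\bigl(|X_{\lambda_n}|\geq c\,\lambda_n^{p}\bigr)\geq\epsilon>0$ uniformly in $n$: without it, the subsequential limit $(\widehat X,\widehat Y)$ could perfectly well be the zero solution, in which case nothing has been proved --- the limit being nonzero \emph{is} the theorem. You flag this yourself as ``the crux,'' but you give no argument for it, and it is not a routine technicality: it demands a quantitative lower bound on the long-time growth of solutions started from $(1,0)$, namely that $X_t$ is of order $t^{p}$ with probability bounded away from zero uniformly in $t$. Your appeal to Theorem \ref{t2} cannot fill the gap: that result is purely qualitative (the solution never reaches the origin), gives no growth rate, and is unavailable for $\alpha\leq 1/2$ in any case. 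Worse, the natural way to prove the missing estimate is through the very time-change representation the paper uses (under the time change, $h(X_{T^{-1}(t)})=h(1)+J_t$, so growth of $X$ reduces to growth of integrated Brownian motion together with control of $T^{-1}$). So the proposal defers the entire analytic content of the theorem to an unproven step, and the most plausible route to that step collapses back into the paper's argument.
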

A few remarks are in order.  \\
\textbf{Remarks:}
\begin{enumerate}
\item The proof of Theorem \ref{t1} builds on the Yamada-Watanabe
argument, as do the vast majority of strong uniqueness proofs for SDE, which go beyond the case of Lipschitz coefficients.
\item The proofs of Theorems \ref{t2} and \ref{t3} rely on a time-change
argument.  The proofs of Theorems 2 and 3 rely on a time-change argument, 
and the idea is inspired by Girsanov’s nonuniqueness example for SDE (see 
e.g. Example 1.22 in Chapter 1.3 of \cite{ce05}).
\item Note that the coefficient $|x|^\alpha$ is Lipschitz continuous except in a neighborhood of $x=0$.
\end{enumerate}

Now we turn our attention to the question of blowup in finite time. In the case of stochastic heat equation \eqref{she}, the critical  H\"older continuity index $\gamma$ of $f$ is $3/2$. If $\gamma>3/2$, then the solution blows up in finite time with positive probability (see \cite{ms93},\cite{mue00}). For $\gamma<3/2$, the solution does not blow up almost surely \cite{mue91l}. It is still unknown what happens when $\gamma=3/2$.

The blowup property of the stochastic wave equation appears to be more
difficult to analyze. It is still not known what conditions on $f$ give
finite time blowup of the solution of \eqref{swe} (see \cite{mr14}).
Sufficient conditions for the divergence of the expected $L^{2}$ norm of the
solutions in finite time were derived by Chow in \cite{chow09}. This result
however is insufficient to establish the almost sure blowup of the
solutions to (\ref{swe}). 
 We study the solution of \eqref{wave-sde} as the first step for approaching the stochastic wave equation.

The finite time blowup of the solutions of the first order stochastic differential equations can be checked by the Feller test for explosions (for example, see \cite{im74}); however, there is not a simple way to check  in the case of higher order equations. It is well-known that the solution of \eqref{wave-sde} doesn't blow up if the coefficients have at most linear growth (that is $\alpha \le 1$). In the next theorem, we prove that when $\alpha >1$, the solution of (\ref{wave-sde}) blows up in finite time with probability one. Before stating the theorem, we define some stopping times.

For any solution $(X_t,Y_t)$ of (\ref{wave-sde}), let
\begin{equation*}
\sigma^{X}_{L}:= \inf\{t> 0: \ |X_{t}| \geq L\}
\end{equation*}
 and
\begin{equation*}
\sigma^{X}: =\lim_{L\rightarrow \infty }\sigma^{X}_{L}.
\end{equation*}
$\sigma^Y$ can be defined analogously. Then, the following theorem holds.

\begin{theorem} \label{t4}
Assume that $\alpha >1$ and $(x_0,y_0)\neq (0,0)$. Then, the solution of (\ref{wave-sde}) satisfies
\begin{equation*}
\sigma^{X}=\sigma^{Y} <\infty
\end{equation*}
almost surely. Moreover, $|(X_t,Y_t)|_{\ell^\infty} \rightarrow \infty$ as $t \rightarrow \sigma^X$, where $|(x,y)|_{\ell^\infty}=|x|\vee|y|$ is the $\ell^\infty$ norm.
\end{theorem}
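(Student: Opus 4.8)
The plan is to reduce the two–dimensional degenerate system to a single explicit functional of a Brownian motion by a time change, and to read off both the equality $\sigma^X=\sigma^Y$ and the finiteness of the explosion time from that formula. First I would record that $\sigma^X=\sigma^Y$: if $Y$ stays bounded on a time interval then $X=x_0+\int_0^\cdot Y_s\,ds$ is Lipschitz, hence bounded, so $X$ cannot blow up before $Y$; conversely, if $X$ stays bounded by $L$ on $[0,T]$ then $\langle Y\rangle_T=\int_0^T|X_s|^{2\alpha}\,ds\le TL^{2\alpha}$, and a continuous local martingale with bounded quadratic variation does not explode, so $Y$ cannot blow up before $X$. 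Hence the two explosion times coincide; call the common value $\sigma$. Since $(x_0,y_0)\neq(0,0)$ and $\alpha>1>1/2$, Theorems \ref{t1} and \ref{t2} apply, so on $[0,\sigma)$ the solution is unique and never equals $(0,0)$; in particular, wherever $X_t=0$ we have $Y_t\neq0$, the zeros of $X$ are isolated, and $A_t:=\langle Y\rangle_t=\int_0^t|X_s|^{2\alpha}\,ds$ is strictly increasing.

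Next comes the time change. By Dambis--Dubins--Schwarz write $Y_t=y_0+W_{\theta}$ with $\theta=A_t$ and $W$ a Brownian motion. In the clock $\theta$ one has $dt/d\theta=|X|^{-2\alpha}$ and $dX/d\theta=Y|X|^{-2\alpha}$, so $|X|^{2\alpha}\,dX=(y_0+W_\theta)\,d\theta$, which integrates to
\[
\Phi(X_\theta)=\Phi(x_0)+(2\alpha+1)\Big(y_0\theta+\int_0^\theta W_s\,ds\Big),\qquad \Phi(x):=\mathrm{sgn}(x)\,|x|^{2\alpha+1}.
\]
Writing $J_\theta$ for the right-hand side, this gives $|X_\theta|=|J_\theta|^{1/(2\alpha+1)}$ and
\[
\sigma=\int_0^{A_\sigma}|X_\theta|^{-2\alpha}\,d\theta=\int_0^{A_\sigma}|J_\theta|^{-p}\,d\theta,\qquad p:=\frac{2\alpha}{2\alpha+1}.
\]
To avoid circularity I would run this construction in reverse: starting from a free Brownian motion $W$ on $[0,\infty)$, define $J_\theta$, set $X_\theta=\Phi^{-1}(J_\theta)$ and $Y_\theta=y_0+W_\theta$, and define the strictly increasing time change $t(\theta)=\int_0^\theta|J_s|^{-p}\,ds$; its inverse produces a solution of (\ref{wave-sde}) on $[0,\sigma)$ with $\sigma=\int_0^\infty|J_s|^{-p}\,ds$, which by the uniqueness in Theorem \ref{t1} is \emph{the} solution. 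Because $\limsup_{\theta\to\infty}|W_\theta|=\infty$ and $W$ is bounded on bounded intervals, one also checks $A_\sigma=\infty$, so $\theta$ ranges over all of $[0,\infty)$.

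The heart of the matter is then to prove $\int_0^\infty|J_\theta|^{-p}\,d\theta<\infty$ almost surely for $\alpha>1$. I would do this in expectation, using that $J_\theta$ is Gaussian with mean $\mu_\theta=\Phi(x_0)+(2\alpha+1)y_0\theta$ and variance $v_\theta=(2\alpha+1)^2\theta^3/3$. For $p<1$ the bound $E|J_\theta|^{-p}\le C_p\,v_\theta^{-p/2}=C\,\theta^{-3p/2}$ is integrable at $\theta=\infty$ exactly when $3p/2>1$, i.e. $p>2/3$, which is equivalent to $\alpha>1$; this is the precise point where the hypothesis enters and where the critical exponent is pinned down. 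Near $\theta=0$ the mean-zero bound is too weak, and here I would use $(x_0,y_0)\neq(0,0)$: if $x_0\neq0$ then $\mu_\theta\to\Phi(x_0)\neq0$ keeps $E|J_\theta|^{-p}$ bounded, while if $x_0=0$ and $y_0\neq0$ then $|\mu_\theta|\sim\theta$ dominates $\sqrt{v_\theta}\sim\theta^{3/2}$, giving $E|J_\theta|^{-p}\lesssim\theta^{-p}$, integrable since $p<1$. Combining the two regimes yields $E\sigma<\infty$, hence $\sigma<\infty$ a.s. The main obstacle is exactly this integrability estimate: although the typical size $|J_\theta|\sim\theta^{3/2}$ suggests convergence, $J_\theta$ recrosses zero for arbitrarily large $\theta$, so the argument must control these recurrent near-zeros — which is precisely what passing to the Gaussian expectation $\int_0^\infty E|J_\theta|^{-p}\,d\theta$ accomplishes cleanly.

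Finally, with $\sigma^X=\sigma^Y=\sigma<\infty$ established, I would deduce the blowup of $|(X_t,Y_t)|_{\ell^\infty}$ from the reconstructed trajectory as $\theta\to\infty$, using that $|J_\theta|$ is unbounded. Establishing that the $\ell^\infty$ norm genuinely tends to infinity, rather than merely having infinite $\limsup$, is the delicate last step, since the coordinates $X_\theta$ and $Y_\theta=y_0+W_\theta$ are individually recurrent in the clock $\theta$; one must rule out their being simultaneously small along times approaching $\sigma$, and I would handle this by quantifying the growth of $J_\theta$ along the times where $Y$ returns to bounded levels.
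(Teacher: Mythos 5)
Your reduction is structurally the same as the paper's: the time change $\theta=\int_0^t|X_s|^{2\alpha}ds$ turns the problem into (i) showing $\int_0^\infty|J_\theta|^{-p}\,d\theta<\infty$ a.s.\ with $p=\frac{2\alpha}{2\alpha+1}\in(2/3,1)$, and (ii) showing that the pair $(W_\theta,J_\theta)$ has $\ell^\infty$ norm tending to infinity. Your treatment of (i) is correct and in fact more elementary than the paper's: where you use the mean-independent bound $E|m+\sigma Z|^{-p}\leq C_p\sigma^{-p}$ at large times together with the drift $\mu_\theta$ (from $(x_0,y_0)\neq(0,0)$) near $\theta=0$, the paper's Lemma \ref{lemmaforthm4} goes through an exact confluent-hypergeometric representation of $E|m+\sigma Z|^{-\beta}$ (Lemma \ref{lem:gauss-moment}) and then splits into the regimes $k(t)=t^{-1}$ and $k(t)=t^{-3}$; both routes pin down the same window $2/3<\beta<1$. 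Your upfront argument that $\sigma^X=\sigma^Y$ (bounded $X$ forces bounded quadratic variation of $Y$; bounded $Y$ forces Lipschitz $X$) is sound, and the reverse construction from a free Brownian motion combined with the uniqueness of Theorem \ref{t1} removes the circularity correctly.

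The genuine gap is step (ii), which you explicitly defer: ``one must rule out their being simultaneously small along times approaching $\sigma$, and I would handle this by quantifying the growth of $J_\theta$ along the times where $Y$ returns to bounded levels.'' That sentence restates what must be proved; it is not a proof. This is precisely the paper's Proposition \ref{prop-trans} (transience of $(B_t,J_t)$), and it is a substantive piece of the argument, not a routine verification. The mechanism there is a joint anti-concentration estimate: since $\det M_t=t^4/12$, the joint density satisfies $f_{B_t,J_t}(x,y)\leq t^{-2}$, which makes $P\bigl(|B_{n^2}|\leq n^{1-\delta_3},\,|J_{n^2}|\leq n^{2+\delta_2}\bigr)$ summable in $n$ when $\delta_2<\delta_3$; this is then combined with fluctuation control on the blocks $[n^2,(n+1)^2]$ (reflection-principle bounds for $B$, and the velocity bound $\sup_{n^2\leq t\leq(n+1)^2}|J_t-J_{n^2}|\leq(2n+1)\sup|B_t|$ for $J$) to show that whichever coordinate is large at time $n^2$ remains large across the entire block. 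Without some version of this two-dimensional argument — individual recurrence of $W_\theta$ and of $J_\theta$ makes any one-coordinate argument hopeless — your construction only yields $\limsup_{t\to\sigma^X}|(X_t,Y_t)|_{\ell^\infty}=\infty$, which is strictly weaker than the theorem's claim that the $\ell^\infty$ norm converges to infinity. So as written, your proposal establishes $\sigma^X=\sigma^Y<\infty$ but not the ``moreover'' clause.
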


We now give some remarks.

\noindent\textbf{Remarks:}
\begin{enumerate}
\item The result of Theorem \ref{t4} is derived by showing that the blowup property of the solutions of (\ref{wave-sde}) follows from the transience property of a simplified time changed system. By proving that the inverse time change transforms infinite time to a finite time, we establish the finite time  blowup property.
\item From the proof of Theorem \ref{t4} it follows that  $|X_t|$ and $|Y_t|$ will fluctuate up and down as $t \rightarrow \sigma^X$ and won't converge to any number in $\mathbf{R} \cup \{\infty\}$. However, due to the correlation between them, $|X_t|\vee|Y_t| \rightarrow \infty$ as $t \rightarrow \sigma^X$ (see Remark \ref{remark-J-B} in Section \ref{sec-t4}).
\end{enumerate}
\paragraph{\textbf{Structure of the paper.}} The rest of this paper is dedicated to the proofs of Theorems \ref{t1}--\ref{t4}. In Section \ref{sec-t1}, we prove Theorem \ref{t1}. Section \ref{sec-t3} is devoted to the proof of Theorem \ref{t3}. In Sections  \ref{sec-t2} and \ref{sec-t4}, we prove Theorems \ref{t2} and \ref{t4} respectively.

\section{Proof of Theorem \ref{t1}} \label{sec-t1}

Let $(X^i_t,Y^i_t):i=1,2$ be two solutions to (\ref{wave-sde}) starting from $(x_0,y_0) \neq (0,0)$ and $\tau$ be the first time $t$ that either $(X^1_t,Y^1_t)$ or
$(X^2_t,Y^2_t)$ hits the origin.  Let $\tau_n$ for a natural number $n$ be the first time $t$ at which either
\begin{equation*}
|(X^1_t,Y^1_t)|_{\ell^\infty}\wedge|(X^2_t,Y^2_t)|_{\ell^\infty}\leq2^{-n}\end{equation*}
or
\begin{equation*}
|(X^1_t,Y^1_t)|_{\ell^\infty}\vee|(X^2_t,Y^2_t)|_{\ell^\infty}\geq2^{n}.
\end{equation*}
Since the coefficients of (\ref{wave-sde}) have at most linear growth, we have $|(X^1_t,Y^1_t)|_{\ell^\infty}\vee|(X^2_t,Y^2_t)|_{\ell^\infty} <\infty$ almost surely. As a result,
\begin{equation} \label{tau}
\lim_{n\to\infty}\tau_n=\tau.
\end{equation}
Note that it is possible that $\tau=\infty$.

We will show uniqueness up to time $\tau_n$ for each fixed $n$. Let $(X^{i,n}_t,Y^{i,n}_t)$ be the processes after stopping the noise at time $\tau_n$, that is
\begin{align}
\label{wave-sde-truncated}
dX^{i,n}_t &= Y^{i,n}_tdt  \nonumber\\
dY^{i,n}_t &= |X^{i,n}_t|^\alpha \mathbf{1}_{[0,\tau_n]}(t)dB_t  \\
X^{i,n}_0 &= x_0, \qquad  Y^{i,n}_0 = y_0.  \nonumber
\end{align}
So, $Y_t^{i,n}$ is constant for $t\geq\tau_n$.  We claim that for each $i=1$,2, there is at most one time $t>\tau_n$ at which $X^{i,n}_t = 0.$
Indeed, if $Y^{i,n}_{\tau_n}=0$, then $X^{i,n}_t$ is constant for
$t\geq\tau_n$ and this constant cannot be 0 because
$|(X^{i,n}_{\tau_n},Y^{i,n}_{\tau_n})|_{\ell^\infty}\ne0$.  In this case, there is no time
$t\geq\tau_n$ at which $X^{i,n}_t=0$.  But if $Y^{i,n}_t$ is a nonzero
constant for $t\geq\tau_n$, then $X^{i,n}_t$ is a nonconstant affine function
of $t$ for $t\geq\tau_n$, and so equals 0 at most once for $t\geq\tau_n$.

We will also define stopping times $\sigma^i_1<\sigma^i_2<\cdots$ as the successive times $t$ at which $X^{i,n}_t=0$.  We claim that with
probability 1, there are only finitely many such times.  The preceding
argument shows that for $i$ fixed, there is at most one value of $k$ for
which $\sigma^i_k>\tau_n$.  For $t<\tau_n$, since
$|(X^{i,n}_t,Y^{i,n}_t)|_{\ell^\infty}>2^{-n}$, we see that once $X^{i,n}_t=0$, it cannot
again hit 0 before time $\tau_n$ without first achieving the level
$X^{i,n}_t=2^{-n}$.  To see this, first assume that when $X^{i,n}_t=0$, we
have $Y^{i,n}_t>0$.  The case $Y^{i,n}_t<0$ is similar and will be omitted.
As long as $t<\tau_n$, we have $|Y^{i,n}_t|<2^{n}$ and so $X^{i,n}_t$ has bounded
velocity.  At first, $X^{i,n}_t$ has positive velocity.  If $X^{i,n}_t$ is
ever to reach 0 again, its velocity must change sign, that is, $Y^{i,n}_t$
must reach 0.  But by the lower bound on $|(X^{i,n}_t,Y^{i,n}_t)|_{\ell^\infty}$, if
$Y^{i,n}_t=0$, we have $X^{i,n}_t>2^{-n}$ and since the velocity of
$X^{i,n}_t$ is bounded by $2^{n}$, it follows that $X^{i,n}_t$ takes at
least time $2^{-2n}$ to reach level $2^{-n}$.  Thus, the number of $\sigma^i_k$'s is almost surely bounded.

For simplicity, define $\sigma^i_0=0$.  Also, if $\sigma^i_k$ is the last of
these stopping times, define $\sigma^i_{k+m}=\sigma^i_k$ for $m>0$. \medskip \\
We moreover define
$$
\tilde \sigma^{i}_{k} = \sigma^i_k\wedge \tau_{n}, \quad k=0,1, \cdots, \ i=1,2.
$$
From (\ref{wave-sde-truncated}), it follows that in order to prove Theorem \ref{t1}, it is enough to show the pathwise uniqueness for the solutions of (\ref{wave-sde-truncated}) for any $n\geq1$. We have shown that the sequence of stopping times $\tilde \sigma^i_1<\tilde \sigma^i_2<\cdots$ is a.s. finite for $i=1,2$, therefore the following lemma is the last ingredient in the proof of Theorem \ref{t1}.

 \begin{lemma}
\label{reduced-to}
Assume that $(X^{1,n}_t,Y^{1,n}_t)=(X^{2,n}_t,Y^{2,n}_t)$ for
$t\leq\tilde \sigma^{1}_k$ a.s., and therefore $\tilde \sigma^1_k=\tilde \sigma^2_k$ a.s. Then
$(X^{1,n}_t,Y^{1,n}_t)=(X^{2,n}_t,Y^{2,n}_t)$ for $t\leq\tilde \sigma^{1}_{k+1}$
a.s., and $\tilde \sigma^1_{k+1}=\tilde \sigma^2_{k+1}$ a.s.
\end{lemma}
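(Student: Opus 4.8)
The plan is to prove the lemma by an ``excursion'' argument on the interval $I=[\tilde\sigma^1_k,\tilde\sigma^1_{k+1}]$, showing that the two solutions, which coincide at the left endpoint $s_0:=\tilde\sigma^1_k=\tilde\sigma^2_k$ by hypothesis, are forced to coincide throughout $I$. Write $U_t=X^{1,n}_t-X^{2,n}_t$ and $V_t=Y^{1,n}_t-Y^{2,n}_t$; from \eqref{wave-sde-truncated} these satisfy $U_t=\int_{s_0}^t V_s\,ds$ and $V_t=\int_{s_0}^t(|X^{1,n}_s|^\alpha-|X^{2,n}_s|^\alpha)\mathbf{1}_{[0,\tau_n]}(s)\,dB_s$ on $I$, with $U_{s_0}=V_{s_0}=0$. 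If $s_0=\tau_n$ the claim is trivial, since for $t\geq\tau_n$ both solutions are deterministic and already equal, and $\tilde\sigma^1_{k+1}=\tilde\sigma^2_{k+1}=\tau_n$; so assume $s_0=\sigma^1_k<\tau_n$, in which case $X^{1,n}_{s_0}=X^{2,n}_{s_0}=0$ and, because the solution cannot sit at the origin before $\tau_n$, the common velocity $y^\ast:=Y^{1,n}_{s_0}=Y^{2,n}_{s_0}$ is nonzero. (When $\alpha\geq 1$ the coefficient $|x|^\alpha$ is locally Lipschitz everywhere, so everything below collapses to a standard Gronwall argument; the genuine case is $1/2<\alpha<1$.)

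The key device is a linear lower bound on $|X^{i,n}|$ coming from this nonzero initial velocity. Assume $y^\ast>0$ (the opposite sign is symmetric) and set $\rho:=\inf\{t>s_0:\,Y^{1,n}_t\leq y^\ast/2\text{ or }Y^{2,n}_t\leq y^\ast/2\}$. By continuity of the velocities $\rho>s_0$ a.s., and on $[s_0,\rho]$ both velocities exceed $y^\ast/2$, so $X^{i,n}_t\geq\tfrac{y^\ast}{2}(t-s_0)>0$ for $t\in(s_0,\rho]$; in particular both $X^{i,n}$ stay positive there and $\rho\leq\sigma^1_{k+1}\wedge\sigma^2_{k+1}$. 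Since $x\mapsto x^\alpha$ is smooth away from $0$, the mean value theorem gives on $[s_0,\rho]$
\[
\bigl||X^{1,n}_t|^\alpha-|X^{2,n}_t|^\alpha\bigr|\leq \alpha\Bigl(\min(X^{1,n}_t,X^{2,n}_t)\Bigr)^{\alpha-1}|U_t|\leq C\,(t-s_0)^{\alpha-1}|U_t|,
\]
with an $\mathcal F_{s_0}$-measurable constant $C=C(y^\ast,\alpha)$. After this localization the coefficient difference is genuinely Lipschitz in $U$, at the price of a time-singular factor $(t-s_0)^{\alpha-1}$.

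Next I would run a coupled $L^2$ estimate. Writing $f(t)=E[U_{t\wedge\rho}^2]$ and $g(t)=E[V_{t\wedge\rho}^2]$ and conditioning on $\mathcal F_{s_0}$ (so that $y^\ast$ and $C$ may be treated as constants), the It\^o isometry together with the bound above yields $g(t)\leq C^2\int_{s_0}^t(s-s_0)^{2\alpha-2}f(s)\,ds$, while Cauchy--Schwarz applied to $U=\int V$ gives $f(t)\leq(t-s_0)\int_{s_0}^t g(s)\,ds$. Substituting one into the other produces a Gronwall inequality $f(t)\leq C'(t-s_0)^2\int_{s_0}^t(s-s_0)^{2\alpha-2}f(s)\,ds$ whose kernel is integrable precisely because $2\alpha-2>-1$, that is, because $\alpha>1/2$; this is exactly where the hypothesis enters. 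Since $f$ is bounded (both solutions are bounded by $2^n$ before $\tau_n$) and $f(s_0)=0$, iterating this singular Gronwall inequality forces $f\equiv0$, hence $U\equiv V\equiv0$ on $[s_0,\rho]$.

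It remains to continue past $\rho$ and identify the next zero. On $\{\rho<\sigma^1_{k+1}\wedge\sigma^2_{k+1}\}$ we now have $X^{1,n}_\rho=X^{2,n}_\rho>0$ and $Y^{1,n}_\rho=Y^{2,n}_\rho$, and away from $0$ the coefficient $|x|^\alpha$ is locally Lipschitz; standard pathwise uniqueness (a plain Gronwall argument) then propagates $U\equiv V\equiv0$ from $\rho$ up to the first time after $\rho$ that either $X^{i,n}$ returns to $0$. Consequently $X^{1,n}=X^{2,n}$ throughout $I$, so the two processes return to $0$ simultaneously, giving $\sigma^1_{k+1}=\sigma^2_{k+1}$, hence $\tilde\sigma^1_{k+1}=\tilde\sigma^2_{k+1}$, together with coincidence up to that time. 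I expect the main obstacle to be the singular Gronwall step: justifying the linear lower bound and carrying the random, $\mathcal F_{s_0}$-measurable constants through the expectation (handled by conditioning on $\mathcal F_{s_0}$, or by splitting on the dyadic levels $\{y^\ast\in[2^{-j-1},2^{-j})\}$ to obtain deterministic constants), since this is where the borderline integrability $\alpha>1/2$ is genuinely used.
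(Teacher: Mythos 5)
Your proposal is correct and follows essentially the same strategy as the paper's proof: a linear lower bound $X^{i,n}_t \geq c\,(t-s_0)$ coming from the nonzero velocity at the zero of $X$, the mean value theorem to convert this into a Lipschitz-type bound with singular weight $(t-s_0)^{\alpha-1}$, and a Gronwall inequality with kernel $(s-s_0)^{2\alpha-2}$, integrable precisely because $\alpha>1/2$, together with standard local-Lipschitz uniqueness away from the origin. The only real difference is bookkeeping: the paper localizes spatially, stopping when $|X^{i,n}|$ reaches $2^{-n}$, so that the definition of $\tau_n$ forces the deterministic velocity bound $Y^{i,n}_t\geq 2^{-n}$ and all Gronwall constants are deterministic, whereas you stop when the velocity drops to $y^\ast/2$ and handle the resulting random constant by conditioning on $\mathcal{F}_{s_0}$ (or dyadic splitting) --- both variants work.
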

\begin{proof}
We prove the lemma for $k=0$, that is $\tilde \sigma^{1}_0=0$. The proof for other
values of $k$ is identical.  Furthermore, since (\ref{wave-sde}) is
invariant under the map $(X,Y)\to(-X,-Y)$, we may restrict ourselves to the case
\begin{equation*}
y_0>0.
\end{equation*}
Recall that $|x|^\alpha$ is a Lipschitz continuous function except in a neighborhood of $x=0$. Hence it is enough to prove
the uniqueness of the solutions to (\ref{wave-sde-truncated}) starting at $X^{i,n}_0=0$ up to the first time that either one of $|X^{i,n}_t|$'s hits level $2^{-n}$. Therefore, we can restrict time $t$ to the interval $[0,\eta]$, where $\eta$ is
the first time $t<\tau_n$ at which
\begin{equation*}
|X^{1,n}_t\vee X^{2,n}_t|=2^{-n}.
\end{equation*}
If there is no such time, then $\eta=0$.
Since
$|X^{1,n}|$ and $|X^{2,n}|$ lie in $[0,2^{-n}]$, it follows from the definition of
$\tau_n$ that
\begin{equation*}
Y^{i,n}_t\geq 2^{-n},
\end{equation*}
for $i=1,2$, and therefore $X^{i,n}_t$'s are increasing for $t\in[0,\eta]$. Recall that $Y$ is the velocity of $X$. Since $X^{i,n}_0=0$, we have
\begin{equation} \label{l-bnd}
X^{i,n}_t\geq 2^{-n}t,
\end{equation}
for $i=1,2$ and $t\in[0,\eta]$.  It also follows that
\begin{equation*}
\eta\leq1.
\end{equation*}

Note that
\begin{equation*}
X_t^{i,n}=\int_{0}^{t}\int_{0}^{s}|X^{i,n}_r|^\alpha \mathbf{1}_{[0,\tau_n]}(r)dB_rds
\end{equation*}
and
\begin{equation*}
X_t^{1,n}-X_t^{2,n}=\int_{0}^{t}\int_{0}^{s}\big(|X^{1,n}_r|^\alpha-|X^{2,n}_r|^\alpha\big)
 \mathbf{1}_{[0,\tau_n]}(r)dB_rds.
\end{equation*}
By the Cauchy-Schwarz inequality and Ito's isometry, we get
\begin{align*}
E\left[\left(X^{1,n}_t-X^{2,n}_t\right)^2\right]
&\leq tE\int_{0}^{t}\left(\int_{0}^{s}
 \big(|X^{1,n}_r|^\alpha-|X^{2,n}_r|^\alpha\big)
  \mathbf{1}_{[0,\tau_n]}(r)dB_r\right)^2ds  \\
&= tE\int_{0}^{t}\int_{0}^{s}
 \big(|X^{1,n}_r|^\alpha-|X^{2,n}_r|^\alpha\big)^2
  \mathbf{1}_{[0,\tau_n]}(r)drds    \\
&\leq tE\int_{0}^{t}\int_{0}^{t}
 \big(|X^{1,n}_r|^\alpha-|X^{2,n}_r|^\alpha\big)^2drds  \\
&\leq t^2E\int_{0}^{t}\big(|X^{1,n}_r|^\alpha-|X^{2,n}_r|^\alpha\big)^2dr.
\end{align*}

Now the mean value theorem gives, for $0<a<b$, that for some $c\in(a,b)$
we have
\begin{equation*}
b^\alpha-a^\alpha = \alpha c^{\alpha-1}(b-a) \leq \alpha a^{\alpha-1}(b-a).
\end{equation*}
Thus for $t\in[0,\eta]$, using the lower bound on $X^{i,n}_t$ in (\ref{l-bnd}), we get
\begin{equation*}
\Big||X^{1,n}_r|^\alpha-|X^{2,n}_r|^\alpha\Big|
\leq \alpha(2^{-n}r)^{\alpha-1}\Big||X^{1,n}_r|-|X^{2,n}_r|\Big|.
\end{equation*}

Now let
\begin{equation*}
D_t:=E\Big[\left(|X^{1,n}_r|-|X^{2,n}_r|\right)^2\Big].
\end{equation*}
Since $\eta\leq1$, we get for every $t\in[0,\eta]$,
\begin{equation}
\label{pre-gronwall}
D_t\leq C_n\int_{0}^{t}r^{2\alpha-2}D_rdr
\end{equation}
for some constant $C_n$ depending on $n$.  Since $\alpha>1/2$, we have
$2\alpha-2>-1$ and therefore $r^{2\alpha-2}$ is integrable on $r\in[0,\eta]$.
Since $D_0=0$, Gronwall's lemma implies that $D_t=0$ for all $t\in[0,\eta]$.
This ends the proof of Lemma \ref{reduced-to}, and also the proof of Theorem
\ref{t1}.
\end{proof}

\section{Proof of Theorem \ref{t3}}\label{sec-t3}

Since the solution is starting at $(x_0,y_0)=(0,0)$, we see that
$(X_t,Y_t)\equiv(0,0)$ is a solution to (\ref{wave-sde}).  Our goal is to exhibit
another solution, but this will be a weak solution.  To gain information
about strong uniqueness, we recall the following lemma of Yamada and
Watanabe (see V.17, Theorem 17.1 of Rogers and Williams \cite{rw87}).

\begin{lemma}[Yamada and Watanabe]
\label{yamada-watanabe-lemma} Let $\sigma$ and $b$ be previsible path functionals, and consider the SDE:
\begin{equation}
\label{yw-sde}
dX_{t}=\sigma(t,X_\cdot)dB_{t}+b(t,X_\cdot)dt.
\end{equation}
Then this SDE is exact if and only if the following two conditions hold:
\begin{enumerate}
\item The SDE (\ref{yw-sde}) has a weak solution,
\item The SDE (\ref{yw-sde}) has the pathwise uniqueness property.
\end{enumerate}
Uniqueness in law then holds for (\ref{yw-sde}).
\end{lemma}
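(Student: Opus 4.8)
The plan is to prove the classical Yamada--Watanabe equivalence, whose substantive half is the implication that weak existence together with pathwise uniqueness yields a strong (``exact'') solution and uniqueness in law; the reverse implication is essentially formal, since a strong solution is in particular a weak solution, and the solution functional supplied by exactness forces any two solutions driven by the same Brownian motion to agree. So I would concentrate on the forward construction. Let $W=C([0,\infty);\mathbf{R})$ denote path space equipped with Wiener measure $\mathbb{P}_W$, and start from a weak solution, i.e.\ a probability space carrying a Brownian motion $B$ and a process $X$ solving (\ref{yw-sde}). The first step is to record the joint law $\mu$ of $(X,B)$ on $W\times W$ and to disintegrate it over the second coordinate: since $W$ is Polish, there is a measurable family of regular conditional probabilities $w\mapsto Q_w(dx)$, the conditional law of the solution given the driving path, with $\mu(dx,dw)=Q_w(dx)\,\mathbb{P}_W(dw)$.

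Next I would build a coupling of two conditionally independent copies of the solution driven by a common noise. On $W\times W\times W$ define
\begin{equation*}
\nu(dx^1,dx^2,dw)=Q_w(dx^1)\,Q_w(dx^2)\,\mathbb{P}_W(dw),
\end{equation*}
so that, writing $X^1,X^2,B$ for the three coordinate processes, $X^1$ and $X^2$ are independent given $B$ and each pair $(X^i,B)$ has law $\mu$. Equipping this space with the right-continuous, completed filtration generated by $(X^1,X^2,B)$, the crucial claim is that $B$ remains a Brownian motion for this enlarged filtration and that each $(X^i,B)$ is again a weak solution of (\ref{yw-sde}). The point is that conditioning on $B$ leaves the law of $X^i$ unchanged, so adjoining the independent copy $X^{3-i}$ does not alter the innovation structure of $X^i$ relative to $B$; hence the stochastic integral $\int_0^t\sigma(s,X^i_\cdot)\,dB_s$ and the martingale relations defining a solution are unaffected by the enlargement.

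Granting this, pathwise uniqueness applies on the common space: since $(X^1,B)$ and $(X^2,B)$ solve (\ref{yw-sde}) with the same $B$, I get $X^1=X^2$ $\nu$-almost surely. Because $X^1$ and $X^2$ are conditionally independent given $B$ yet coincide, their common conditional law must be degenerate, $Q_w=\delta_{F(w)}$ for some Borel map $F\colon W\to W$; thus $X=F(B)$ almost surely, which is precisely a strong solution, and since $F$ is read off from the disintegration it is independent of the underlying space, giving exactness. Finally, uniqueness in law follows by applying the same disintegration-and-coupling argument to two arbitrary weak solutions: both are forced to equal $F(B)$, so both have the law $F_*\mathbb{P}_W$.

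The main obstacle is the measure-theoretic verification in the second step, namely that $B$ stays a Brownian motion and that the $(X^i,B)$ remain solutions after passing to the enlarged, coupled filtration. This requires care with the existence and measurability of the regular conditional probabilities $Q_w$ (hence the Polish hypothesis), with null sets and filtration completion, and above all with showing that the defining stochastic integral is not disturbed by the extra coordinate; the cleanest route is to verify the associated martingale problem directly under $\nu$, using the conditional independence to reduce every relevant conditional expectation back to one computed under $\mu$.
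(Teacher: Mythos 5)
There is an important mismatch of expectations here: the paper does not prove this lemma at all. It is quoted as a known result, with the proof deferred to Rogers and Williams \cite{rw87} (V.17, Theorem 17.1), and it is used in Section 3 only through its contrapositive: exhibiting a second, nonzero weak solution started from the origin shows that pathwise uniqueness must fail. So there is no internal proof to compare against; what you have written is a reconstruction of the classical Yamada--Watanabe argument, and it is essentially the same argument as in the cited reference (and in Ikeda--Watanabe or Karatzas--Shreve): disintegrate the joint law of $(X,B)$ over the driving path, couple two conditionally independent copies $X^1,X^2$ over a common noise, invoke pathwise uniqueness on the coupled space to force the conditional law $Q_w$ to be a Dirac mass $\delta_{F(w)}$, and read off the solution map $F$. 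Your outline is sound, and you correctly single out the genuinely delicate step, namely that $B$ remains a Brownian motion, and each $(X^i,B)$ remains a solution, with respect to the enlarged filtration of the coupling.

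One point you gloss over deserves to be named: exactness (strong existence) requires more than $X=F(B)$ almost surely for a Borel map $F$ on path space; $F$ must be adapted, i.e.\ $X_t$ must be measurable with respect to the completed $\sigma$-field generated by $(B_s)_{s\le t}$. The disintegration a priori yields only measurability with respect to the entire driving path, and establishing adaptedness requires a further argument --- one must show that the conditional law of the solution restricted to $[0,t]$, given the whole of $B$, depends only on $(B_s)_{s\le t}$. This belongs to the same cluster of measure-theoretic verifications you defer at the end, but it is a separate item from the martingale-problem check, and a complete proof has to address it explicitly. With that addition, your proposal matches the standard proof that the paper's citation points to.
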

Rogers and Williams define \textit{exact} in V.9, Definition 9.4, but it is
not important for our purposes.  Here, $X,b\in\mathbf{R}^n$ and $\sigma\sigma^T$ takes
values in the space of nonnegative definite $n\times n$ matrices.

We already have a weak solution to (\ref{wave-sde}), namely $(X_t,Y_t)\equiv(0,0)$.
So, if we can exhibit a weak solution which is nonzero, then by Lemma
\ref{yamada-watanabe-lemma}, pathwise uniqueness must fail.

Now we construct a nonzero weak solution to (\ref{wave-sde}).
Since \[Y_t=\int_{0}^{t}|X_s|^\alpha dB_s\] is a one-dimensional stochastic integral, it
follows that $Y_t$ is a time-changed Brownian motion.  In particular, if we define
\begin{equation} \label{t-change}
T(t):=\int_{0}^{t}|X_s|^{2\alpha}ds,
\end{equation}
then
\begin{equation*}
\tilde{B}_t:=Y_{T^{-1}(t)}
\end{equation*}
is a standard Brownian motion
as long as
\begin{equation}\label{inverse-t-change}
T^{-1}(t) = \inf\{s\geq 0: T(s)>t\}
\end{equation} is well-defined.
\\\\
We also define
\begin{align}
\label{def-tilde}
\tilde{X}_t&:=X_{T^{-1}(t)}\\
\tilde{Y}_t&:=Y_{T^{-1}(t)}=\tilde{B}_t.  \nonumber
\end{align}
Then, by the chain rule and the inverse function differentiation rule,
\begin{equation*}
d\tilde{X}_t=\tilde{Y}_t|\tilde{X}_t|^{-2\alpha}dt,
\end{equation*}
with the same initial conditions as before.  Thus,
\begin{equation*}
|\tilde{X}_t|^{2\alpha}d\tilde{X}_t=\tilde{Y}_t dt.
\end{equation*}

Let
\begin{equation}
\label{dife-h}
h(x):=\frac{1}{2\alpha+1}|x|^{2\alpha+1}\text{sgn$(x)$}
\end{equation}
and observe that
\begin{equation}
\label{diff-h}
dh(x)=|x|^{2\alpha}dx.
\end{equation}
Since we are assuming that $\alpha>0$, it follows that $dh(0)=0$ and
(\ref{diff-h}) holds for $x=0$.  It is easy to check that (\ref{diff-h}) also holds when $x>0$
and $x<0$.

Let
\begin{equation} \label{V-def}
\tilde{V}_t:=h(\tilde{X}_t).
\end{equation}
Then from (\ref{def-tilde}), we have
\begin{equation} \label{VY-eq}
\begin{aligned}
d\tilde{V}_{t}&=\tilde{Y}_{t}dt \\
d\tilde{Y}_{t}&=d\tilde{B}_{t}
\end{aligned}
\end{equation}
and therefore
\begin{equation} \label{h-X-eq}
h(X_{T^{-1}(t)})=h(\tilde{X}_t)=\tilde{V}_t=\int_{0}^{t}\tilde{B}_sds.
\end{equation}
Note that for any $t>0$, we have
$$
P\Big(\tilde{X}_t \not = 0\Big)=P\Big(\tilde{V}_t \not = 0\Big)=1.
$$
So, in order to prove that $X$ can escape from 0, it is enough to show that $T^{-1}(t)<\infty$ for some $t>0$,
with positive probability.
\medskip \\
Let $t>0$. Then, rewriting $T^{-1}(t)$ using the inverse function derivative,
\begin{equation}  \label{T-int}
\begin{aligned}
T^{-1}(t)&=\int_{0}^{t}\frac{d}{ds}T^{-1}(s)ds  \\
&=\int_{0}^{t}\frac{1}{|X_{T^{-1}(s)}|^{2\alpha}}ds\\
&=\int_{0}^{t}\frac{1}{|\tilde{X}_{s}|^{2\alpha}}ds  \\
&=\int_{0}^{t}\left|h^{-1}\left(\int_{0}^{s}\tilde{B}_rdr\right)\right|
  ^{-2\alpha}ds  \\
&=C\int_{0}^{t}\left|\int_{0}^{s}\tilde{B}_rdr\right|
  ^{-\frac{2\alpha}{2\alpha+1}}ds.
\end{aligned}
\end{equation}
The following lemma, which will be proved at the end of this section, helps us to bound the above integral.
\begin{lemma}\label{lemma-brownian-int}
If $0<\beta<2/3$, then for any $\delta>0$,
\begin{equation*}
I_\beta(\delta)=I(\delta):
 =\int_{0}^{\delta}\left|\int_{0}^{t}B_sds\right|^{-\beta}ds<\infty
\end{equation*}
almost surely.
\end{lemma}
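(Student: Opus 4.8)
The plan is to show that $I(\delta)$ has finite expectation; since $I(\delta)\ge 0$, finiteness of $E[I(\delta)]$ immediately forces $I(\delta)<\infty$ almost surely. Write $J_t:=\int_0^t B_s\,ds$ for the integrated Brownian motion. The key structural fact is that $J_t$ is a centered Gaussian random variable: it is an $L^2$ limit of Gaussian Riemann sums, and its variance is $\mathrm{Var}(J_t)=\int_0^t\int_0^t \min(s,r)\,ds\,dr = t^3/3$. Equivalently, by Brownian scaling $J_t$ has the same law as $t^{3/2}$ times a fixed Gaussian, so $|J_t|$ is of order $t^{3/2}$ near $t=0$, which is precisely what makes the exponent $2/3$ critical.

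First I would apply Tonelli's theorem to the nonnegative integrand to obtain $E[I(\delta)]=\int_0^\delta E\big[|J_t|^{-\beta}\big]\,dt$. Then I would evaluate the inner expectation using the Gaussian density: writing $J_t=\sigma_t Z$ with $\sigma_t^2=t^3/3$ and $Z$ standard normal, one has $E\big[|J_t|^{-\beta}\big]=\sigma_t^{-\beta}\,E\big[|Z|^{-\beta}\big]$. The constant $E\big[|Z|^{-\beta}\big]=\tfrac{1}{\sqrt{2\pi}}\int_{-\infty}^{\infty}|z|^{-\beta}e^{-z^2/2}\,dz$ is finite precisely because $\beta<1$, since then the singularity $|z|^{-\beta}$ at the origin is integrable; this holds because $\beta<2/3<1$. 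Hence $E\big[|J_t|^{-\beta}\big]=C_\beta\,t^{-3\beta/2}$ for an explicit constant $C_\beta$.

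It then remains only to integrate in time: $E[I(\delta)]=C_\beta\int_0^\delta t^{-3\beta/2}\,dt$, which is finite if and only if $3\beta/2<1$, that is $\beta<2/3$. This is exactly the hypothesis of the lemma, so $E[I(\delta)]<\infty$ and the conclusion follows.

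There is no serious obstacle in this argument; the only point requiring care is the two-tier integrability check. The negative Gaussian moment $E\big[|Z|^{-\beta}\big]$ is finite for the mild reason $\beta<1$, whereas the genuinely binding constraint comes from the time integral near $t=0$, where the $t^{3/2}$ scaling of $J_t$ produces the singularity $t^{-3\beta/2}$, and integrability of this forces the sharp threshold $\beta<2/3$. One minor subtlety worth noting is that $J_t$ also vanishes at isolated random interior times $t>0$, where the integrand again blows up; but the expectation computation sidesteps this entirely, since $E\big[|J_t|^{-\beta}\big]$ is finite for every fixed $t>0$ and the Tonelli bound only feels the $t\to 0$ behaviour, so no separate treatment of interior zeros is needed.
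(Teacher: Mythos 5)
Your proposal is correct and follows essentially the same route as the paper's own proof: compute $\mathrm{Var}(J_t)=t^3/3$, use the Gaussian scaling $J_t\stackrel{\mathcal{D}}{=}Ct^{3/2}Z$ to get $E\bigl[|J_t|^{-\beta}\bigr]=C_\beta t^{-3\beta/2}$, and integrate in time, with finiteness exactly when $3\beta/2<1$. Your explicit invocation of Tonelli and your remark that the negative moment of $Z$ only needs $\beta<1$ are slightly more careful statements of steps the paper performs implicitly, but the argument is the same.
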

By the assumptions of Theorem \ref{t3}, $0<\alpha<1$. Since this  is equivalent to
\begin{equation*}
0<\frac{2\alpha}{2\alpha+1}<\frac{2}{3},
\end{equation*}
thanks to Lemma \ref{lemma-brownian-int}, the integral in (\ref{T-int}) is finite almost surely.
This finishes the proof of nonuniqueness.
\qed
\medskip \\
\begin{proof}[Proof of Lemma \ref{lemma-brownian-int}]
We check that for all $t>0$ and for $0<\beta<2/3$,
\begin{equation*}
E\left[I(t)\right]<\infty.
\end{equation*}
Let
\begin{equation} \label{J-int}
J_t:=\int_{0}^{t}B_sds.
\end{equation}
Note that $J_{t}$ is a normal random variable with mean 0. Next we compute its
variance.
\begin{equation} \label{var-brownian-int}
\begin{aligned}
\text{Var}(J_t) &= E\left[\left(\int_{0}^{t}B_sds\right)^2\right]  \\
&=\int_{0}^{t}\int_{0}^{t}E\left[B_{r}B_{s}\right]drds  \\
&=2\int_{0}^{t}\int_{0}^{s}E\left[B_rB_s\right]drds  \\
&=2\int_{0}^{t}\int_{0}^{s}rdrds  \\
&=2\int_{0}^{t}\frac{s^2}{2}ds  \\
&=\frac{t^3}{3}.
\end{aligned}
\end{equation}
Now let $Z\sim N(0,1)$ be a standard normal random variable.  From (\ref{var-brownian-int}), it follows that
\begin{equation*}
J_t\stackrel{\mathcal{D}}{=}Ct^{3/2}Z
\end{equation*}
and so
\begin{align*}
E\left[\left|\int_{0}^{t}B_sds\right|^{-\beta}\right]
=Ct^{-3\beta/2}E\Big[|Z|^{-\beta}\Big].
\end{align*}
First, if $\beta<2/3$ then
\begin{equation*}
E\Big[|Z|^{-\beta}\Big]
= C\int_{-\infty}^{\infty}|x|^{-\beta}\exp\left(-\frac{x^2}{2}\right)dx
 <\infty.
\end{equation*}
Secondly,
\begin{align*}
E[I(\delta)]&=\int_{0}^{\delta}
 E\left[\left|\int_{0}^{t}B_sds\right|^{-\beta}\right]dt \\
&=C\int_{0}^{\delta}t^{-3\beta/2}dt \\
&<\infty
\end{align*}
provided $3\beta/2<1$, which is equivalent to $\beta<2/3$.
\end{proof}

\section{Proof of Theorem \ref{t2}}\label{sec-t2}

Fix the initial point $(x_0,y_0)\ne(0,0)$, and let
\begin{equation*}
Z_t:=\left(B_t,\int_{0}^{t}B_sds\right)=(B_t,J_t).
\end{equation*}
We need to study the joint distribution of the components $B_t$ and
$\int_{0}^{t}B_sds$, which are jointly centered Gaussian.  Using
(\ref{var-brownian-int}) and by a simple calculation, we find that the
covariance matrix of $(B_t,J_t)$ is
\begin{equation*}
M_t=\left(
\begin{matrix}
t & t^2/2 \\
t^2/2 & t^3/3
\end{matrix}
\right)
\end{equation*}
and
\begin{equation*}
\det(M_t)=\frac{t^4}{12}.
\end{equation*}
Since $(B_t,J_t)$ is jointly Gaussian, its joint probability density has the
following bound.
\begin{equation} \label{densityJB}
f_{B_t,J_t}(x,y)
=\frac{\exp\left[-(x,y)M_t^{-1}(x,y)^T\right]}{\sqrt{(2\pi)^2t^4/12}}
\leq\frac{1}{\sqrt{(2\pi)^2t^4/12}}
\leq t^{-2}.
\end{equation}

We define the following events
\begin{align*}
A&=\{Z_t=(0,0)\text{ for some $t>0$}\}  \\
A_N&=\{Z_t=(0,0)\text{ for some $t\in[1/N,N]$}\}
\end{align*}
for natural numbers $N$.
We wish to prove that $P(A)=0$, and it is enough to prove that $P(A_N)=0$ for all $N$.  From now on, let $N$ be fixed.

Fix $0<\delta <1$ and let $k,m,n$ be natural numbers.  We
define a few more events:
\begin{align*}
E_{1,n,N}&=\left\{\sup_{1/N<t<N}|B_t|\leq n\right\},  \\
E_{2,k,n}^{c}&=\{|B_{k2^{-2n}}|\leq2^{-n(1-\delta)},|J_{k2^{-2n}}|\leq2^{-2n(1-\delta)}\}, \\
E_{3,n,N}&=\bigcap_{k:\;k2^{-2n}\in[1/N,N]}E_{2,k,n},  \\
E_{4,k,n}&=\left\{\sup_{t\in[k2^{-2n},(k+1)2^{-2n}]}
   |B_t-B_{k2^{-2n}}|<2^{-n(1-\delta)}\right\},  \\
E_{5,n,N}&=\bigcap_{k:\;k2^{-2n}\in[1/N,N]}E_{4,k,n},  \\
E_{6,k,n}&=\left\{\sup_{t\in[k2^{-2n},(k+1)2^{-2n}]}
   |J_t-J_{k2^{-2n}}|<2^{-2n(1-\delta)}\right\},  \\
E_{7,n,N}&=\bigcap_{k:\;k2^{-2n}\in[1/N,N]}E_{6,k,n}.
\end{align*}
As $k$ varies, $k2^{-2n}$ is a grid of points which gets
denser as $n$ increases.

Next, note that
\begin{equation*}
\lim_{n\to\infty}P(E_{1,n,N}^c)=0.
\end{equation*}
From (\ref{densityJB}) we have for all $k2^{-2n}\geq1/N$
\begin{equation*}
P(E_{2,k,n}^c)\leq 4\cdot2^{-3n(1-\delta)}N^2,
\end{equation*}
and therefore
\begin{equation*}
P(E_{3,n,N}^c)\leq 4N2^{2n}\cdot 2^{-3n(1-\delta)}N^2
=4N^32^{-n+3\delta}.
\end{equation*}

To deal with $E_{5,n,N}$, recall that L\'evy's modulus of continuity for
Brownian motion (see M\"orters and Peres \cite{mper10}, Theorem 1.14) states
that for $T>0$ fixed, we have
\begin{equation} \label{mod-con}
\lim_{n\to\infty}\sup_{0<h\leq2^{-2n}}\sup_{0\leq t\leq T-h}
\frac{|B_{t+h}-B_t|}{\sqrt{2h\log\log(h)}}
=1, \quad\text{ a.s.,}
\end{equation}
and therefore
\begin{equation*}
\lim_{n\to\infty}P(E_{5,n,N}^c)=0.
\end{equation*}

Now we deal with $J_t$.  Note that on $E_{1,n,N}$, the velocity of $J_t$ is
bounded by $n$ in absolute value.  It follows that on $E_{1,n,N}$, all of the
$E_{6,k,n}$'s occur and so on $E_{1,n,N}$, $E_{7,n,N}$ also occurs.

Observe that on $E_{3,n,N}\cap E_{5,n,N}\cap E_{7,n,N}$ we have
$(B_t,J_t)\ne0$ for $1/N<t<N$.  Also, by the above we have
\begin{equation*}
\lim_{n\to\infty}P(E_{1,n,N}\cap E_{3,n,N}\cap E_{5,n,N}\cap E_{7,n,N})
=1.
\end{equation*}
It follows that
\begin{equation*}
P\big((B_t,J_t)\ne0\text{ for $1/N<t<N$}\big)=1.
\end{equation*}
Since $N$ was arbitrary, this finishes the proof of Theorem \ref{t2}.
\qed
\section{Proof of Theorem \ref{t4} } \label{sec-t4}
The proof of Theorem \ref{t4} contains two main ingredients. Recall that in Section \ref{sec-t3}, we showed that a solution of system (\ref{wave-sde}) with $0<\alpha<1$ and $(x_0,y_0)=(0,0)$ can be represented as a time change of $(B_t,J_{t})$, where $J_{t}$ was defined in (\ref{J-int}).
In Proposition \ref{prop-trans}, we will prove that $(B_t,J_{t})$ is transient. In Lemma~\ref{lemmaforthm4}, we will prove that when $\alpha>1$ and $(x_0,y_0) \neq (0,0)$, the inverse time  change $T^{-1}(t)$  in \eqref{inverse-t-change} satisfies $P\left(\sup_{t>0} T^{-1}(t)<+\infty \right)=1$.  In other words, the time change $T^{-1}(t)$ changes infinite time to finite time almost surely, and this will complete the proof of Theorem \ref{t4}.
\begin{proposition} \label{prop-trans}
Let $\{B_{t}\}_{t\geq 0}$ be a one-dimensional Brownian motion starting from 0. Then the spatial process $\{(B_t,J_{t})\}_{t\geq 0}$ is transient.
\end{proposition}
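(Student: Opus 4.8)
The plan is to prove transience in the occupation-time sense: for every bounded Borel set $K\subset\mathbf{R}^2$, the total (Lebesgue) amount of time the process $Z_t=(B_t,J_t)$ spends in $K$ is almost surely finite. Since an occupation time is a nonnegative random variable, it suffices to show that its expectation is finite, i.e.
\[
E\left[\int_{0}^{\infty}\mathbf{1}_K(B_t,J_t)\,dt\right]<\infty,
\]
and then a.s. finiteness is automatic. By Tonelli's theorem this expectation equals $\int_0^\infty P\big((B_t,J_t)\in K\big)\,dt$, so the whole problem reduces to integrability in $t$ of the hitting probabilities $P(Z_t\in K)$.

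The one tool I would use is the Gaussian density estimate (\ref{densityJB}): since $(B_t,J_t)$ is centered Gaussian with $\det(M_t)=t^4/12$, its density is bounded by $t^{-2}$. I would split the time integral at $t=1$. For $t\le 1$ I bound crudely $P(Z_t\in K)\le 1$, contributing at most $1$. For $t>1$ I use (\ref{densityJB}) to get $P(Z_t\in K)=\int_K f_{B_t,J_t}(x,y)\,dx\,dy\le |K|\,t^{-2}$, where $|K|$ is the Lebesgue measure of $K$, so that
\[
\int_{1}^{\infty}P(Z_t\in K)\,dt\le |K|\int_{1}^{\infty}t^{-2}\,dt=|K|.
\]
Adding the two pieces gives $E[\int_0^\infty \mathbf 1_K(Z_t)\,dt]\le 1+|K|<\infty$, which is exactly the transience of $(B_t,J_t)$. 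I would also note that the bound (\ref{densityJB}) depends only on $\det(M_t)$ and not on the mean of the Gaussian; hence, if the process is started from an arbitrary point $(x_0,y_0)$ rather than the origin, only the mean of $(B_t,J_t)$ changes while $M_t$ is unchanged, so the same estimate $p_t\le t^{-2}$ and therefore the same conclusion hold from every starting state.

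The decisive point --- and the only place where something real happens --- is the decay rate $t^{-2}$, which is what makes $\int_1^\infty P(Z_t\in K)\,dt$ converge; this in turn comes from $\det(M_t)\asymp t^4$, reflecting that $J_t$ spreads on the super-diffusive scale $t^{3/2}$ while $B_t$ spreads on scale $t^{1/2}$. I would stress that the occupation-time formulation is the appropriate notion of transience here: one should not expect $|(B_t,J_t)|\to\infty$ pathwise, because the marginal $B_t$ is a recurrent one-dimensional Brownian motion and returns near $0$ infinitely often, so the pair keeps coming back toward the axis $\{x=0\}$; transience asserts only that it spends finite total time in any bounded region. The remaining care is purely bookkeeping: the uniformity in the starting point (so that $(B_t,J_t)$ is genuinely a transient Markov process, as needed for the time-change argument of Theorem \ref{t4}) and the harmless small-$t$ piece handled by $P\le 1$. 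If one instead wanted the stronger pathwise statement that the process leaves every ball eventually and never returns, the occupation-time bound would no longer suffice, and the main obstacle would become a Borel--Cantelli argument along a geometric sequence of times combined with fluctuation control --- L\'evy's modulus of continuity (\ref{mod-con}) for $B$ and the bounded velocity $\dot J_t=B_t$ for $J$ --- exactly in the spirit of the proof of Theorem \ref{t2}.
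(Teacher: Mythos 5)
Your occupation-time computation is correct as far as it goes: Tonelli's theorem plus the density bound (\ref{densityJB}) does give $E\int_0^\infty \mathbf{1}_K(B_t,J_t)\,dt \le 1+|K|<\infty$, hence a.s.\ finite occupation time of every bounded set. But this is a strictly weaker notion of transience than the one the paper proves, and --- decisively --- weaker than the one the paper uses. In the proof of Theorem \ref{t4}, Proposition \ref{prop-trans} is invoked to conclude that $|(\tilde V_t,\tilde Y_t)|_{\ell^\infty}\to\infty$ as $t\to\infty$ almost surely; it is this pathwise divergence that transfers, under the finite time change $T^{-1}$, into the blowup statement $|(X_t,Y_t)|_{\ell^\infty}\to\infty$ as $t\to\sigma^X$. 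Finite occupation time of compact sets does not imply pathwise divergence: a continuous path can make infinitely many visits to a fixed ball whose durations shrink fast enough to have finite total length, and then the time-changed process would keep returning to bounded values at times arbitrarily close to $\sigma^X$, destroying the conclusion of Theorem \ref{t4}. So your proposition cannot be substituted into the paper's argument without a genuine additional step (e.g.\ a recurrence/transience dichotomy for this degenerate diffusion), which you neither state nor prove.

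The deeper error is that you explicitly assert the pathwise statement is false (``one should not expect $|(B_t,J_t)|\to\infty$ pathwise, because $B_t$ is recurrent \dots the pair keeps coming back toward the axis $\{x=0\}$''). This is a non sequitur: returning to the unbounded axis $\{x=0\}$ is not returning to a bounded set. The pathwise statement is true, and it is exactly what the paper's proof establishes: partition time into blocks $[n^2,(n+1)^2]$, apply the density bound (\ref{densityJB}) at the grid times $n^2$ together with Borel--Cantelli (this is where $\delta_2<\delta_3$ enters), and control the within-block fluctuations (reflection principle for $B$, the bounded velocity $\dot J_t=B_t$ for $J$). The outcome, recorded in Remark \ref{remark-J-B}, is that for all large $t$ one of $|B_t|\ge t^{1/2-\delta_3/2}$ or $|J_t|\ge t^{1+\delta_2/2}$ holds a.s.: at the times when $B_t$ is near $0$, $J_t$ is at distance of order $t^{3/2}$ from the origin and cannot decay to $O(1)$ during the short stretches when $B$ is small. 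The correlation you dismiss is precisely the mechanism that makes $|(B_t,J_t)|_{\ell^\infty}$ diverge, and capturing it requires the block-by-block argument rather than an integrated density bound.
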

\begin{proof}
Let $0<\delta_1<\delta_2<\delta_3<1/2$ and $0<\delta_4<1/2-\delta_3$.  We define the following events
\begin{align*}
A^c_{1,n}&=\left\{\left|B_{n^2}\right| \leq n^{1-\delta_3},\;
 |J_{n^2}|\leq n^{2+\delta_2}\right\},  \\
A_{2,N}&=\bigcap_{n=N}^\infty A_{1,n} ,  \\
A_{3,n}&=\bigg\{\sup_{n^2 \leq t \leq (n+1)^2}\left|B_t-B_{n^2}\right|<n^{1/2+\delta_4}\bigg\} ,\\
A_{4,N}&=\bigcap_{n=N}^\infty A_{3,n},  \\
A_{5,n}&=\bigg\{\sup_{n^2\leq t\leq(n+1)^2}|J_t-J_{n^2}|< n^{2+\delta_1}\bigg\},  \\
A_{6,N}&=\bigcap_{n=N}^\infty A_{5,n}.
\end{align*}
Note that $(B_t,J_t)$ is transient on the set $A_{2,N}\cap A_{4,N}\cap A_{6,N}$. We now show that the probability of this set tends to 1 as $N\rightarrow \infty $.

Using inequality (\ref{densityJB}), we get
\begin{equation*}
\label{eq-e-1}
P(A^c_{1,n})\leq C(n^2)^{-2}n^{3-\delta_3+\delta_2}
=Cn^{-1-\delta_3+\delta_2}.
\end{equation*}
It follows from a comparison principle that
\begin{equation} \label{A-2}
P(A^c_{2,N})\le \sum_{n \ge N} P(A^c_{1,n}) \leq CN^{-\delta_3+\delta_{2}} \rightarrow 0,
\end{equation}
as $N \rightarrow \infty$, since $\delta_2<\delta_3$.

A bound of the probability of the event $A^{c}_{3,n}$  can be computed by time change and reflection principle:
\begin{align*}
 P(A^{c}_{3,n})&=P\left(\sup_{n^2 \leq t \leq (n+1)^2}\left|B_t-B_{n^2}\right|\ge n^{1/2+\delta_4}\right)\\
 &=P\left(\sup_{0 \leq t \leq 2n+1}\left|B_t\right|\ge n^{1/2+\delta_4}\right)\\
 &=P\left(\sup_{0 \leq t \leq 1}\left|B_t\right|\ge \frac{n^{1/2+\delta_4}}{\sqrt{2n+1}}\right)\le P\left(\sup_{0 \leq t \leq 1}\left|B_t\right|\ge \frac{1}{\sqrt{3}}n^{\delta_4}\right)\\
 &\le 4P\left(B_1\ge \frac{1}{\sqrt{3}}n^{\delta_4}\right)\le C \exp\left\{-\frac{2}{3}n^{2\delta_4}\right\}.
\end{align*}
It follows that
\begin{equation} \label{A-4}
P(A^{c}_{4,N}) \le \sum_{n \ge N} P(A^c_{3,n}) \rightarrow 0
\end{equation}
as $N \rightarrow \infty$.

By the law of iterated logarithm for Brownian motion (see e.g. Theorem 5.1 in \cite{mper10}), there exists $N_*>0$ such that for all $n \ge N_*$,
\begin{equation*}
\sup_{n^2\leq t\leq(n+1)^2}|J_t-J_{n^2}| \le (2n+1) \sup_{n^2\leq t\leq(n+1)^2}|B_{t}|\le n^{2+\delta_1}
\end{equation*}
almost surely. It follows that
\begin{equation}\label{A-6}
\lim_{N\rightarrow \infty} P(A_{6,N})= 1.
\end{equation}
From (\ref{A-2})--(\ref{A-6}) we get
\begin{equation*}
\lim_{N\rightarrow \infty}P(A_{2,N}\cap A_{4,N}\cap A_{6,N}) =1,
\end{equation*}
and the conclusion that $(B_{t},J_{t})$ is transient follows.
\end{proof}
\begin{remark}  \label{remark-J-B}
From the proof of Proposition \ref{prop-trans}, we can get a lower bound on the growth rate of $(B_{t},J_{t})$.
Since the time intervals $[n^2,(n+1)^2]$ are of lengths $2n+1$, the fluctuations
of $B_t$ over such intervals are of order $n^{1/2+\delta_4}<<n^{1-\delta_3}$
for large values of $n$.  This assertion holds because $0<\delta_3<1/2$ and $0<\delta_4<1/2-\delta_3$.
So the fluctuations won't bring $B_t$ to 0, if it is not already close to 0.

As for $J_t$, on the time intervals $[n^2,(n+1)^2]$, the fluctuations of $J_t$ are bounded by
$n^{2+\delta_1}$.  This is of smaller order than
$n^{2+\delta_2}$  since $\delta_1<\delta_2$.

Therefore, for large values of $t$, one of the two inequalities
\begin{align*}
|B_t|&\geq t^{1/2-\delta_3/2}\\
|J_t|&\geq t^{1+\delta_2/2}
\end{align*}
always
holds a.s., where $0<\delta_{2} <\delta_{3}<1/2$.

Note that both $B_t$ and $J_t$ are recurrent processes which return to 0 infinitely often. However, if we consider the collection of the processes $(B_t,J_t)$, if one process takes a small value, the other will take a large value, due to the correlation between them we will eventually have $|(B_t,J_t)|_{\ell^\infty}\rightarrow \infty$ as $t \rightarrow \infty$.
\end{remark}

\paragraph{\textbf{Proof of Theorem \ref{t4}}} Suppose that $\alpha>1$ and the solution $(X_t,Y_t)$ of  \eqref{wave-sde} started from $(x_0,y_0) \neq (0,0)$. Recall that with the definitions for $T(t)$ and $h(x)$ in \eqref{t-change}  and \eqref{dife-h},
 the time-changed process  $(\tilde V_{t},\tilde Y_{t})=(h(X_{T^{-1}(t)}),Y_{T^{-1}(t)})$ defined in \eqref{VY-eq} satisfies
 \begin{equation}  \label{v-transform}
\begin{aligned}
\tilde V_{t}&=h(x_{0})+y_0t+ \int_{0}^{t}\tilde B_sds\\
\tilde Y_{t}&=y_0+\tilde B_{t},
\end{aligned}
\end{equation}
where $\tilde B_{t}$ is a standard one-dimensional Brownian motion.
\medskip \\
Thanks to Proposition \ref{prop-trans}, it is true that $|(\tilde{V}_t,\tilde{Y}_t)|_{\ell^\infty} \rightarrow \infty$ as $t \rightarrow \infty$ almost surely. If we can show that
\begin{equation} \label{fin-T-inv}
P\big(\lim_{t\rightarrow \infty}T^{-1}(t) <\infty \big)=1,
\end{equation}
then blowup in finite time for $( X_{t}, Y_{t})$ will follow. For this purpose, we state Lemma~\ref{lemmaforthm4}.

\begin{lemma}
\label{lemmaforthm4} Suppose $(x_0,y_0) \neq (0,0)$. If  $2/3<\beta<1$, then
$\int_0^\infty |h(x_0)+y_0t+J_t|^{-\beta}dt<\infty$ almost surely.
\end{lemma}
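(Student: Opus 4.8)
The plan is to prove the stronger statement that the \emph{expectation} of the integral is finite; by Tonelli's theorem and nonnegativity of the integrand this yields almost sure finiteness at once. Write $c=h(x_0)$ and $W_t=c+y_0 t+J_t$, where $J_t=\int_0^t B_s\,ds$. For each fixed $t>0$ the variable $W_t$ is Gaussian with mean $\mu_t=c+y_0 t$ and, by the variance computation \eqref{var-brownian-int}, variance $\sigma_t^2=t^3/3$; in particular $W_t$ is non-degenerate for $t>0$, so $P(W_t=0)=0$ and the integrand is a.s.\ finite for a.e.\ $t$. The entire argument then reduces to producing a pointwise bound on $E[|W_t|^{-\beta}]$ that is integrable over $(0,\infty)$.

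To get such a bound I would rescale, writing $W_t=\mu_t+\sigma_t Z$ with $Z\sim N(0,1)$, so that $E[|W_t|^{-\beta}]=\sigma_t^{-\beta}g(a_t)$, where $g(a):=E[|Z+a|^{-\beta}]$ and $a_t=\mu_t/\sigma_t$. I would then establish two facts about $g$. First, $\sup_a g(a)=g(0)=E[|Z|^{-\beta}]<\infty$, finite precisely because $\beta<1$; this holds because $g$ is the convolution of the symmetric decreasing functions $|x|^{-\beta}$ and the Gaussian density, hence is itself symmetric decreasing and maximized at $a=0$. Second, $g(a)\le C|a|^{-\beta}$ for $|a|\ge 2$: on $\{|Z|\le|a|/2\}$ one has $|Z+a|\ge|a|/2$, while the complementary event has exponentially small probability, which I would combine with Hölder's inequality using an exponent $p\in(1,1/\beta)$ so that $E[|Z+a|^{-\beta p}]$ stays finite. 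Together these give
\begin{equation*}
E\big[|W_t|^{-\beta}\big]\le C\,\min\!\big(\sigma_t^{-\beta},\,|\mu_t|^{-\beta}\big)=C\,\min\!\big(t^{-3\beta/2},\,|c+y_0 t|^{-\beta}\big).
\end{equation*}

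It then remains to check that $\int_0^\infty\min(t^{-3\beta/2},|c+y_0 t|^{-\beta})\,dt<\infty$, which is exactly where $2/3<\beta<1$ and $(x_0,y_0)\ne(0,0)$ enter. Near infinity the first term controls the integrand and is integrable precisely because $3\beta/2>1$, i.e.\ $\beta>2/3$. Near the origin I would split into cases: if $x_0\ne0$ then $c\ne0$ and $|c+y_0 t|^{-\beta}$ stays bounded, so the bound is integrable; if $x_0=0$ then $c=0$ and $y_0\ne0$, and the second term equals $|y_0|^{-\beta}t^{-\beta}$, which is smaller than $t^{-3\beta/2}$ for small $t$ and is integrable at $0$ precisely because $\beta<1$. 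At an interior zero $t^\ast=-c/y_0$ of $\mu_t$ the competing term $t^{-3\beta/2}$ stays finite, so no singularity arises there. Hence the integrated bound is finite, $E\big[\int_0^\infty|W_t|^{-\beta}\,dt\big]<\infty$, and the lemma follows.

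The hard part will be the large-$|a|$ estimate $g(a)\le C|a|^{-\beta}$: since $\beta>2/3>1/2$ we have $2\beta>1$, so the naive Cauchy–Schwarz split fails because $E[|Z+a|^{-2\beta}]=\infty$, and one is forced to use the Hölder exponent $p<1/\beta$ together with the Gaussian tail. This estimate is exactly what controls the near-origin behavior in the degenerate case $x_0=0$, and it is the mirror image of the phenomenon in Lemma~\ref{lemma-brownian-int}: there $\beta<2/3$ was needed for integrability at $0$ in the absence of drift, whereas here the drift rescues the origin and $\beta>2/3$ is instead needed for integrability at infinity. I note in passing that invoking transience (Proposition~\ref{prop-trans}) directly does not suffice, since it guarantees $|(\tilde V_t,\tilde Y_t)|_{\ell^\infty}\to\infty$ but not control of the time $\tilde V_t$ spends near $0$; the expectation method above sidesteps this.
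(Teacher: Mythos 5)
Your proof is correct, and it takes a genuinely different route from the paper's. Both arguments make the same first reduction --- by Tonelli it suffices to show $\int_0^\infty E|h(x_0)+y_0t+J_t|^{-\beta}\,dt<\infty$ --- but they diverge in how the pointwise-in-$t$ expectation is controlled. The paper proves an exact integral representation for $E|m+\sigma Z|^{-\beta}$ (Lemma~\ref{lem:gauss-moment}, a confluent hypergeometric formula obtained from the identity $E(\xi^{-\alpha})=\Gamma(\alpha)^{-1}\int_0^\infty E(e^{-\lambda\xi})\lambda^{\alpha-1}d\lambda$ and the Laplace transform of $|m+\sigma Z|^2$), then bounds $\exp\{-C_2uf(t)\}$ with $f(t)=(h(x_0)+y_0t)^2t^{-3}$ by a sum of $\exp\{-Cut^{-1}\}$ and $\exp\{-Cut^{-3}\}$ terms and evaluates the resulting double integrals explicitly, with $2/3<\beta<1$ appearing as exactly the convergence conditions. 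You instead rescale, $E|W_t|^{-\beta}=\sigma_t^{-\beta}g(\mu_t/\sigma_t)$ with $g(a)=E|Z+a|^{-\beta}$, and use two soft estimates: the Anderson-type maximality $g(a)\le g(0)<\infty$ (your convolution-of-symmetric-decreasing-functions justification is sound and can be made airtight via the layer-cake decomposition $|x|^{-\beta}=\int_0^\infty \mathbf{1}_{\{|x|<s^{-1/\beta}\}}\,ds$), and the decay $g(a)\le C|a|^{-\beta}$ for $|a|\ge2$, where your insistence on H\"older with exponent $p<1/\beta$ rather than Cauchy--Schwarz is indeed essential, since $E|Z+a|^{-2\beta}=\infty$ when $\beta>1/2$. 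This yields the transparent bound $C\min\bigl(t^{-3\beta/2},|h(x_0)+y_0t|^{-\beta}\bigr)$, and your integrability case analysis (tail, origin in both the $x_0\ne0$ and $x_0=0$ cases, and the possible interior zero of the drift) is complete; it invokes $(x_0,y_0)\ne(0,0)$ in precisely the same place the paper's conditions $\lim_{t\to0}tf(t)>0$ and $\lim_{t\to\infty}t^3f(t)>0$ do. What each approach buys: the paper's computation is fully explicit and self-contained (and records a formula of independent interest), while yours avoids the special-function identity entirely, is more elementary, and makes visible the dichotomy that $\beta>2/3$ is forced by integrability at infinity where the variance $t^3/3$ dominates, while $\beta<1$ together with the nonzero drift rescues the origin --- the same mechanism that is somewhat hidden inside the paper's $u$-integrals.
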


We will prove the Lemma shortly. If we assume for now that Lemma 4 is true, then from (\ref{T-int}) and (\ref{v-transform}) we can derive that
\begin{align*}
\lim_{t\rightarrow \infty}T^{-1}(t) &=\int_{0}^{\infty}\frac{1}{|X_{T^{-1}(t)}|^{2\alpha}}dt\\
&=\int_0^\infty \left|h(x_0)+y_0t+\int_0^t\tilde{B}_sds\right|^{-\frac{2\alpha}{2\alpha+1}}\,dt.
\end{align*}
By applying Lemma~\ref{lemmaforthm4} for $\beta=\frac{2\alpha}{2\alpha+1}$, we can conclude that \eqref{fin-T-inv} is satisfied. Recall that $\alpha>1$, so that $2/3<\beta<1$, which satisfies the condition for Lemma~\ref{lemmaforthm4}.
\qed
\medskip \\

For the proof of Lemma~\ref{lemmaforthm4}, we first require an alternative 
representation of the expectation $E|X|^{-\beta}$,  where 
$X\sim\mathcal{N}(m,\sigma^2)$ and $0< \beta <1$. We write the integral 
representation of  a confluent hypergeometric function in 
Lemma~\ref{lem:gauss-moment}. Even though this expression is already 
well-known, the authors couldn't find a good reference for it (see 
\cite{win12} and Ch 13 of \cite{as65}). So we give a direct proof of the 
lemma as well.

\begin{lemma} \label{lem:gauss-moment}
  Let $Z$ be a standard $\mathcal{N}(0,1)$ random variable and let $m\in \mathbb{R}$ and $\sigma^2>0$. Then for any $0<\beta<1$,
\begin{equation*}
    E|m + \sigma Z|^{-\beta} = \frac{(2\sigma^2)^{-\beta/2}}{\Gamma(\beta/2)} \int_0^1 e^{-\frac{m^2u}{2\sigma^2}} u^{\beta/2-1}(1-u)^{-\beta/2-1/2}du.
\end{equation*}
\end{lemma}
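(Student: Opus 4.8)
The plan is to compute the expectation directly from the Gaussian density, linearizing the negative power $|x|^{-\beta}$ through an auxiliary integral so that the whole quantity becomes a double integral whose inner integral in $x$ is elementary. Writing $X=m+\sigma Z\sim\mathcal N(m,\sigma^2)$, the first step records the Gamma-function identity
\[
|x|^{-\beta}=(x^2)^{-\beta/2}=\frac{1}{\Gamma(\beta/2)}\int_0^\infty t^{\beta/2-1}e^{-tx^2}\,dt,\qquad x\neq0,
\]
which holds because $\beta>0$. Inserting this into $E|X|^{-\beta}=(2\pi\sigma^2)^{-1/2}\int_{\mathbb R}|x|^{-\beta}e^{-(x-m)^2/(2\sigma^2)}\,dx$ and interchanging the order of integration (justified by Tonelli's theorem, the integrand being nonnegative) reduces the problem to the inner Gaussian integral in $x$ for each fixed $t$.

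Next I would evaluate that inner integral by completing the square in the exponent $-tx^2-(x-m)^2/(2\sigma^2)$. With $A:=t+1/(2\sigma^2)$ this exponent equals $-A\big(x-\tfrac{m/\sigma^2}{2A}\big)^2-\tfrac{m^2t}{2\sigma^2t+1}$, so the $x$-integral equals $\sqrt{\pi/A}\,\exp\!\big(-m^2t/(2\sigma^2t+1)\big)$. After the factor $\sqrt{2\pi\sigma^2}$ cancels, this leaves the single integral
\[
E|X|^{-\beta}=\frac{1}{\Gamma(\beta/2)}\int_0^\infty t^{\beta/2-1}(2\sigma^2t+1)^{-1/2}\exp\!\Big(-\frac{m^2t}{2\sigma^2t+1}\Big)\,dt.
\]

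The last step is the change of variables turning this into the stated integral over $(0,1)$. The form of the exponent suggests $u=2\sigma^2t/(2\sigma^2t+1)$, equivalently $t=u/(2\sigma^2(1-u))$, under which $2\sigma^2t+1=(1-u)^{-1}$, the exponent becomes $-m^2u/(2\sigma^2)$, and $dt=(2\sigma^2)^{-1}(1-u)^{-2}\,du$; as $t$ ranges over $(0,\infty)$, $u$ ranges over $(0,1)$. Collecting the powers of $2\sigma^2$ and of $1-u$ produces exactly the integrand $(2\sigma^2)^{-\beta/2}u^{\beta/2-1}(1-u)^{-\beta/2-1/2}e^{-m^2u/(2\sigma^2)}$, which is the asserted identity. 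I expect no genuine analytic obstacle: the computation is routine once the Gamma representation is introduced, and the only care needed is in justifying the interchange of integrals and in bookkeeping the exponents under the substitution. It is worth noting that the resulting integral converges precisely for $0<\beta<1$, since near $u=0$ the factor $u^{\beta/2-1}$ is integrable iff $\beta>0$ and near $u=1$ the factor $(1-u)^{-\beta/2-1/2}$ is integrable iff $\beta<1$, matching the hypothesis exactly.
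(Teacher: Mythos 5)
Your proof is correct and follows essentially the same route as the paper: the Gamma-function representation of $|x|^{-\beta}=(x^2)^{-\beta/2}$, the Gaussian completion of the square (which is exactly the paper's computation of the Laplace transform $E e^{-\lambda|m+\sigma Z|^2}$), and the identical change of variables $u=2\sigma^2 t/(2\sigma^2 t+1)$. The only difference is organizational—you fold the Laplace transform into the inner integral after a Tonelli swap rather than stating it as a separate identity—and your exponent bookkeeping and the convergence remark for $0<\beta<1$ check out.
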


\begin{proof}

First, we prove that if $\xi$ is a nonnegative random variable, then for any $\alpha$ such that the integral converges
\begin{equation}\label{lem:alpha-power2}
    E(\xi^{-\alpha}) = \frac{1}{\Gamma(\alpha)}\int_0^\infty E(e^{-\lambda \xi}) \lambda^{\alpha-1}d\lambda.
\end{equation}

By switching the order of integration and by a change of variables $t = \lambda \xi$ we get
\begin{equation*}
    \int_0^\infty E(e^{-\lambda \xi}) \lambda^{\alpha-1}d\lambda =E \int_0^\infty e^{-t} t^{\alpha-1} \xi^{-\alpha} dt = \Gamma(\alpha)E(\xi^{-\alpha}).
\end{equation*}

Second, we prove that if $Z\sim \mathcal{N}(0,1)$, then the Laplace transform of $|m + \sigma Z|^2$ is for any $\lambda>0$,
\begin{equation} \label{eq:Gauss-Laplace-tran}
  E e^{-\lambda |m + \sigma Z|^2} = \frac{e^{-\frac{\lambda m^2}{1 + 2 \lambda \sigma^2}}}{\sqrt{1 + 2\lambda \sigma^2}}.
\end{equation}
  \begin{align*}
    E e^{-\lambda |m + \sigma Z|^2} &= \frac{1}{\sqrt{2\pi}} \int_{-\infty}^\infty e^{-\lambda m^2 - 2m\lambda\sigma x -\lambda \sigma^2 x^2 - \frac{1}{2}x^2}dx\\
    &= \frac{e^{-\lambda m^2}e^{\frac{2\lambda^2m^2\sigma^2}{1 + 2\lambda \sigma^2}}}{\sqrt{2\pi}} \int_{-\infty}^\infty e^{-\frac{1}{2}(1 + 2\lambda \sigma^2)\left(x^2 +\frac{4\lambda m \sigma x}{1 + 2\lambda \sigma^2} + \frac{4\lambda^2m^2\sigma^2}{(1 +2\lambda \sigma^2)^2} \right)}dx\\
    &=\frac{e^{-\frac{\lambda m^2}{1 + 2 \lambda \sigma^2}}}{\sqrt{2\pi}} \int_{-\infty}^\infty e^{-\frac{1}{2}(1 + 2 \lambda \sigma^2) \left(x + \frac{2\lambda m \sigma}{1 + 2 \lambda \sigma^2} \right)^2}dx\\
    &=\frac{e^{-\frac{\lambda m^2}{1 + 2\lambda \sigma^2}}}{\sqrt{1 + 2 \lambda \sigma^2}}.
  \end{align*}

Now, we are ready to prove the main result. By \eqref{lem:alpha-power2} and  \eqref{eq:Gauss-Laplace-tran},
\begin{align*}
E|m + \sigma Z|^{-\beta} &= E \left(|m + \sigma Z|^2 \right)^{-\beta/2}\\
& = \frac{1}{\Gamma(\beta/2)} \int_0^\infty E\left(e^{-\lambda|m +\sigma Z|^2} \right) \lambda^{\beta/2-1}d\lambda\\
&=\frac{1}{\Gamma(\beta/2)}\int_0^\infty \frac{e^{-\frac{\lambda m^2}{1+2\lambda\sigma^2}}}{\sqrt{1 + 2\lambda \sigma^2}}\lambda^{\beta/2 -1} d\lambda.
\end{align*}
We make the following change of variables 
\[u
=\frac{2\lambda \sigma^2}{1+2\lambda \sigma^2}.
\]
Notice that
\[
\lambda = \frac{u}{(2\sigma^2)(1-u)}
\]
and
\[du = \frac{2\sigma^2}{(1 + 2\lambda \sigma^2)^2} d\lambda.\]
Under this change of variables we have
\begin{align*}
\frac{\lambda^{\beta/2 -1}d\lambda}{\sqrt{1+2\lambda \sigma^2}}& = \frac{(1 + 2\lambda \sigma^2)^{3/2} \lambda^{\beta/2 +1/2}}{2\sigma^2 \lambda^{3/2}}du\\
&=(2\sigma^2)^{1/2}u^{-3/2} \left(\frac{u}{2\sigma^2(1-u)} \right)^{\beta/2+1/2}du\\
& = (2\sigma^2)^{-\beta/2}u^{\beta/2 -1}(1-u)^{-\beta/2-1/2}du.\end{align*}
Therefore, Lemma~\ref{lem:gauss-moment} follows.
\end{proof}

We are now ready to prove Lemma~\ref{lemmaforthm4}.

\begin{proof}[Proof of Lemma \ref{lemmaforthm4}]
We show that
\begin{equation}\label{whattoprove}
E\int_0^\infty |h(x_0)+y_0t+ J_t|^{-\beta}dt=\int_0^\infty E|h(x_0)+y_0t+J_t|^{-\beta}dt<\infty
\end{equation}
for  $2/3<\beta<1$.

Note that from equation \eqref{var-brownian-int}, $h(x_0)+y_0t+J_t$ is a normal random variable with mean $h(x_0)+y_0t$ and variance $t^3/3$. By Lemma \ref{lem:gauss-moment}, for $t>0$, we may write $E|h(x_0)+y_0t+J_t|^{-\beta}$ as the integral representation of  a confluent hypergeometric function.
\begin{align*}
E|h(x_0)+y_0t+J_t|^{-\beta}=&C_1 t^{-\frac{3}{2}\beta}\int_0^1 \exp\{-C_2u(h(x_0)+y_0t)^2t^{-3}\}\\
&\times u^{\frac{\beta}{2}-1}(1-u)^{-\frac{\beta}{2}-\frac{1}{2}} du\\
=& C_1 \int_0^1 t^{-\frac{3}{2}\beta}\exp\{-C_2uf(t)\}g(u) \,du.
\end{align*}
Here, $C_1$ and $C_2$ are positive constants depending on $\beta$,
\[f(t)=(h(x_0)+y_0t)^2t^{-3},\] and
\[g(u)=u^{\frac{\beta}{2}-1}(1-u)^{-\frac{\beta}{2}-\frac{1}{2}}.\]

First, we consider the term $\exp\{-C_2uf(t)\}$. Note that since $(x_{0},y_{0}) \not =(0,0)$, we have
\begin{equation*}
\lim_{t\rightarrow 0}tf(t) >0,  \quad \lim_{t\rightarrow \infty } t^{3}f(t) >0.
\end{equation*}
So, it is possible to find  positive constants $C_3,\cdots,C_6$ such that
\[\exp\{-C_2uf(t)\}\le C_3\exp\{-C_4ut^{-1}\}+C_5\exp\{-C_6ut^{-3}\}\]
for all $t>0$. So, to prove \eqref{whattoprove}, we only need to show the convergence of the integrals of the terms on the right, which are the cases of $k(t)=t^{-1}$ and $k(t)=t^{-3}$.

Let's first consider the first term, so $k(t)=t^{-1}$. Without loss of generality, we may assume that $C_3=C_4=1$. Then, we show that
\begin{align}
\notag\int_0^\infty \int_0^1 t^{-\frac{3}{2}\beta}\exp\{-u/t\}g(u) \,du&\,dt=\\
\label{whattoprove2}\int_0^1 &\left(\int_0^\infty t^{-\frac{3}{2}\beta}\exp\{-u/t\}\,dt\right)g(u)\, du
\end{align}
is finite.
	
By a change of variables $v=u/t$, we get for the integral with respect to $t$
\begin{align*}
\int_0^\infty t^{-\frac{3}{2}\beta}\exp\{-u/t\}\,dt&=\int_0^\infty \frac{u^{1-\frac{3}{2}\beta}}{v^{2-\frac{3}{2}\beta}}\exp\{-v\}\,dv\\
&=u^{1-\frac{3}{2}\beta}\int_0^\infty \frac{1}{v^{2-\frac{3}{2}\beta}}\exp\{-v\}\,dv\\
&=Cu^{1-\frac{3}{2}\beta}
\end{align*}
for some constant $C>0$. Note that the integral
\[\int_0^\infty \frac{1}{v^{2-\frac{3}{2}\beta}}\exp\{-v\}\,dv\]
is finite because $2-3\beta/2<1$, which is equivalent to $\beta>2/3$. Now, \eqref{whattoprove2} becomes
\begin{align*}
C\int_0^1u^{1-\frac{3}{2}\beta}g(u)\, du=C\int_0^1u^{-\beta}(1-u)^{-\frac{\beta}{2}-\frac{1}{2}}\,du.
\end{align*}
This integral is finite if and only if $-\beta>-1$ and $-\frac{\beta}{2}-\frac{1}{2}>-1$, which are equivalent to $\beta<1$.

We can use an analogous method for solving the problem in the case $k(t)=t^{-3}$. Then, we get the conclusion that
\begin{equation*}
\int_0^1 \left(\int_0^\infty t^{-\frac{3}{2}\beta}\exp\{-u/t^3\}\,dt\right)g(u)\, du<\infty
\end{equation*}
if and only if  $\frac{4}{3}-\frac{1}{2}\beta<1$, and $-\frac{\beta}{2}-\frac{1}{2}>-1$, which are equivalent to $2/3<\beta<1$.

One final remark is that the interchanges of the orders of the integrals in the proof are justified by the Fubini's theorem after proving finiteness of the integrals.
\end{proof}


\def\cprime{$'$} \def\cprime{$'$} \def\cprime{$'$}
\providecommand{\bysame}{\leavevmode\hbox to3em{\hrulefill}\thinspace}
\providecommand{\MR}{\relax\ifhmode\unskip\space\fi MR }
\providecommand{\MRhref}[2]{%
  \href{http://www.ams.org/mathscinet-getitem?mr=#1}{#2}
}
\providecommand{\href}[2]{#2}

\end{document}